\theoremstyle{plain}
\newtheorem{theorem}{Theorem}[section]
\newtheorem{lemma}[theorem]{Lemma}
\newtheorem{corollary}[theorem]{Corollary}
\newtheorem{proposition}[theorem]{Proposition}
\newtheorem{conjecture}[theorem]{Conjecture}
\theoremstyle{definition}
\newtheorem{definition}[theorem]{Definition}
\newtheorem{example}[theorem]{Example}
\theoremstyle{remark}
\newtheorem*{remark}{Remark}
\newcommand{\I}{\mathcal{I}}
\newcommand{\J}{\mathcal{J}}
\newcommand{\fin}{\mathrm{Fin}} 
\newcommand{\Fin}{\mathrm{Fin}}
\DeclareMathOperator{\rk}{rk}
\begin{document}


\title{Inductive limits of ideals}


\author[Adam Kwela]{Adam Kwela}
\address[Adam Kwela]{Institute of Mathematics\\ Faculty of Mathematics\\ Physics and Informatics\\ University of Gda\'{n}sk\\ ul.~Wita  Stwosza 57\\ 80-308 Gda\'{n}sk\\ Poland}
\email{Adam.Kwela@ug.edu.pl}
\urladdr{http://kwela.strony.ug.edu.pl/}


\date{\today}


\subjclass[2010]{Primary: 
03E05, 
03E15. 
}


\keywords{Ideals, filters, inductive limits of ideals, inductive limits of filters, rank of analytic ideals, rank of analytic filters, Kat\v{e}tov order of ideals}


\begin{abstract}
G. Debs and J. Saint Raymond in 2009 defined the Borel separation rank of an analytic ideal $\mathcal{I}$ ($\text{rk}(\mathcal{I})$) as minimal ordinal $\alpha<\omega_{1}$ such that there is $\mathcal{S}\in\bf{\Sigma^0_{1+\alpha}}$ with $\mathcal{I}\subseteq \mathcal{S}$ and $\mathcal{I}^\star\cap \mathcal{S}=\emptyset$, where $\mathcal{I}^\star$ is the filter dual to the ideal $\mathcal{I}$ (actually, the authors use the dual notion of filters instead of ideals). Moreover, they introduced ideals $\text{Fin}_\alpha$, for all $\alpha<\omega_1$, and conjectured that $\text{rk}(\mathcal{I})\geq\alpha$ if and only if $\mathcal{I}$ contains an isomorphic copy of $\text{Fin}_\alpha$ ($\text{Fin}_\alpha\sqsubseteq\mathcal{I}$). To define $\text{Fin}_\alpha$ in the case of limit ordinals $0<\alpha<\omega_1$, G. Debs and J. Saint Raymond introduced inductive limits of ideals.

We show that the above conjecture is false in the case of $\alpha=\omega$ by constructing an ideal $\text{Fin}'_\omega$ of rank $\omega$ such that $\text{Fin}_\omega\not\sqsubseteq\text{Fin}'_\omega$. However, we show that $\text{Fin}'_\omega\sqsubseteq\mathcal{I}$ is equivalent to $\forall_{n\in\omega}\text{Fin}_n\sqsubseteq\mathcal{I}$. We discuss (indicated by the above result) possible modification of the original conjecture for limit ordinals.
\end{abstract}


\maketitle


\section{Introduction}

Following G. Debs and J. Saint Raymond \cite{Debs}, for an analytic ideal $\I$ we define its Borel separation rank by:
$${\rm rk}(\mathcal{I})=\min\left\{\alpha<\omega_{1}: \textrm{ there is }\mathcal{S}\in\bf{\Sigma^0_{1+\alpha}} \textrm{ such that }\mathcal{I}\subseteq \mathcal{S}\textrm{ and }\mathcal{I}^\star\cap \mathcal{S}=\emptyset\right\},$$
where $\mathcal{I}^\star$ is the filter dual to the ideal $\mathcal{I}$ (actually, authors of \cite{Debs} use the dual notion of filters instead of ideals). In this paper by rank of $\I$ we mean ${\rm rk}(\mathcal{I})$.

In \cite{Debs} and \cite{Reclaw} it is shown that ranks of analytic ideals are important for studying ideal pointwise limits of sequences of continuous functions: ${\rm rk}(\mathcal{I})=\alpha$ if and only if the family of all $\I$-pointwise limits of continuous real-valued functions defined on a given zero-dimensional Polish space is equal to the family of all functions of Borel class $\alpha$ (the definition of $\I$-pointwise limit can be found in \cite{Debs} or \cite{Reclaw}). For more on ranks of ideals see \cite{Kat}, \cite{Debs2} or \cite{zReclawem}.

G. Debs and J. Saint Raymond defined ideals $\text{Fin}_\alpha$, for $0<\alpha<\omega_1$ (for definition see Subsection \ref{subsec}). To define $\text{Fin}_\alpha$ in the case of limit ordinals $0<\alpha<\omega_1$, they introduced inductive limits of ideals. The main motivation of this paper is the conjecture of G. Debs and J. Saint Raymond from 2009:

\begin{conjecture}[{\cite[Conjecture 7.8]{Debs}}]
\label{hip}
Let $\I$ be an analytic ideal. Then $\text{rk}(\mathcal{I})\geq\alpha$ if and only if $\mathcal{I}$ contains an isomorphic copy of $\text{Fin}_\alpha$.
\end{conjecture}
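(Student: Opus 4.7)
Since the abstract makes clear that Conjecture~\ref{hip} fails at $\alpha=\omega$, my plan is to build a counterexample rather than attempt a proof. One direction of the equivalence should survive: if $\text{Fin}_\alpha\sqsubseteq\I$, then any $\bf\Sigma^0_{1+\alpha}$ separator for $\I$ pulls back along the embedding to a separator for the embedded copy of $\text{Fin}_\alpha$, so rank is monotone under $\sqsubseteq$ and we get $\text{rk}(\I)\geq\text{rk}(\text{Fin}_\alpha)\geq\alpha$. Hence any failure must lie in the other direction, and I would search for an analytic ideal $\I$ with $\text{rk}(\I)\geq\omega$ that nevertheless contains no isomorphic copy of $\text{Fin}_\omega$.

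The natural candidate is a ``finite-support'' weakening of the inductive-limit ideal $\text{Fin}_\omega$. On the disjoint union $X=\bigsqcup_{n<\omega}X_n$, with each $X_n$ carrying a copy of $\text{Fin}_n$, I would define
$$\text{Fin}'_\omega=\bigl\{A\subseteq X:\exists N\,\forall n>N\,(A\cap X_n=\emptyset)\text{ and }\forall n\leq N\,(A\cap X_n\in\text{Fin}_n)\bigr\}.$$
This is essentially the smallest ideal on $X$ containing each $\text{Fin}_n$ as a ``coordinate'' ideal, which is consistent with the abstract's promised characterization $\text{Fin}'_\omega\sqsubseteq\I\Leftrightarrow\forall n\,\text{Fin}_n\sqsubseteq\I$. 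The inclusions $\text{Fin}_n\sqsubseteq\text{Fin}'_\omega$ yield $\text{rk}(\text{Fin}'_\omega)\geq n$ for every $n$, hence $\geq\omega$; for the matching upper bound, a $\bf\Sigma^0_{1+\omega}$ separator can be assembled as a countable union, indexed by $N\in\omega$, of pullbacks of $\bf\Sigma^0_{1+N}$ separators for $\text{Fin}_N$ on the stratum $\bigsqcup_{n\leq N}X_n$, using that $\omega$ is a limit so $\bf\Sigma^0_{1+\omega}$ absorbs such a countable union.

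The main obstacle, and where the real content of the disproof must sit, is verifying $\text{Fin}_\omega\not\sqsubseteq\text{Fin}'_\omega$. The intuition is that $\text{Fin}_\omega$, as an inductive limit, admits ``small'' sets whose support is spread across cofinally many component ideals $\text{Fin}_n$, whereas every element of $\text{Fin}'_\omega$ is by construction concentrated on finitely many strata. I would try to make this rigorous by isolating an invariant of $\text{Fin}_\omega$ --- for instance a cofinal tower of $\I$-positive sets whose $\I$-small selectors must collectively meet infinitely many coordinates --- and showing such an invariant cannot survive inside $\text{Fin}'_\omega$. Alternatively, a direct diagonal argument against a hypothetical isomorphism could exploit countably many chosen $\text{Fin}_\omega$-small sets with cofinal support in order to manufacture a single $\text{Fin}'_\omega$-set meeting infinitely many $X_n$, yielding the desired contradiction with finite support. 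I expect the combinatorial calibration of this last step to be delicate, because $\text{Fin}'_\omega$ is engineered to be a close cousin of $\text{Fin}_\omega$, and the exact obstruction to isomorphism has to be pinned down precisely rather than waved at.
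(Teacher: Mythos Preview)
Your proposed ideal does not have rank $\omega$; it has rank at most $1$. The set
\[
T=\Bigl\{A\subseteq X:\ \exists N\ A\subseteq\bigcup_{n\leq N}X_n\Bigr\}
\]
is $\mathbf{\Sigma^0_2}$ (a countable union of the closed sets $\{A:A\cap\bigcup_{n>N}X_n=\emptyset\}$), it contains your $\text{Fin}'_\omega$ by definition, and it misses the dual filter, since every $A$ with $X\setminus A\in\text{Fin}'_\omega$ must contain $\bigcup_{n>N}X_n$ for some $N$. Hence $\text{rk}(\text{Fin}'_\omega)\leq 1$, and by the easy direction of the conjecture this forces $\text{Fin}_n\not\sqsubseteq\text{Fin}'_\omega$ for every $n\geq 2$. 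The error is in the line ``the inclusions $\text{Fin}_n\sqsubseteq\text{Fin}'_\omega$ yield $\text{rk}(\text{Fin}'_\omega)\geq n$'': having $\text{Fin}_n$ as the \emph{restriction} to a stratum $X_n$ is not the relation $\sqsubseteq$, which demands a bijection on the whole domain $X$. No such bijection can exist here, precisely because your ideal is separated from its dual by a $\mathbf{\Sigma^0_2}$ set.

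The paper's construction avoids this collapse by refusing to use a disjoint union. Its $\text{Fin}'_\omega$ lives on a single copy of $\omega$, on which the various $\text{Fin}^{n+2}$ are overlaid via an \emph{independent} system of partitions $\{X_s:s\in\omega^{n+1}\}$, one for each $n$, with the key requirement that every finite intersection $\bigcap_{i\in F}X_{s_i}$ across different levels is infinite. This independence is exactly what blocks any low-complexity ``support'' separator: there are no strata to project to. The rank computation then goes through a filtration $\text{Fin}'_\omega=\bigcup_n\mathcal{J}_n$ by genuine $\mathbf{\Sigma^0_{2n+4}}$ ideals, and the argument that $\text{Fin}_\omega\not\sqsubseteq\text{Fin}'_\omega$ is a substantial combinatorial diagonalization against this filtration, not a support-counting trick. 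Your intuition that the obstruction should come from ``cofinal support'' is pointing in the right direction, but the carrier of that intuition must be a single set with many intertwined partitions, not a disjoint sum.
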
 

It is known that this is true for $\alpha=1,2$ (\cite[Theorem 7.5]{Debs} and \cite[Theorem 4]{Reclaw}). Moreover, the implication $\Leftarrow$ is true in general. 

Let us point out a connection of the above conjecture to descriptive complexity of ideals. Each ideal $\Fin_n$, for $n\in\omega$, is $\bf{\Sigma^0_{2n}}$. By a result of G. Debs and J. Saint Raymond from \cite{Debs2}, any ideal $\I$ containing an isomorphic copy of $\Fin_n$ cannot be $\bf{\Pi^0_{2n}}$. Thus, Conjecture \ref{hip} would lead to an interesting result, by giving lower estimate of Borel complexity of ideals of a given finite rank. 

What is more, Conjecture \ref{hip} would imply that we can skip the assumption about zero-dimensionality in the result mentioned in the second paragraph of this Introduction and obtain the following result: ${\rm rk}(\mathcal{I})=\alpha$ if and only if the family of all $\I$-pointwise limits of continuous real-valued functions defined on a given Polish space is equal to the family of all functions of Borel class $\alpha$ (cf. \cite[Corollary 7.6]{Debs}).

In this paper we show that Conjecture \ref{hip} is false in the case of $\alpha=\omega$ by constructing an ideal $\text{Fin}'_\omega$ of rank $\omega$ which does not contain an isomorphic copy of $\text{Fin}_\omega$. However, we discuss a way of modifying the original conjecture for limit ordinals.

The paper is organized as follows. In Section 2 we collect some definitions needed in our further considerations. Section 3 contains a short study of the problem when an inductive limit of ideals is an ideal -- we show that $\fin_\alpha$, for all limit ordinals $0<\alpha<\omega_1$, are ideals despite the fact that \cite[Proposition-Definition 5.4]{Debs}, which defines them, is false. Our main result showing that Conjecture \ref{hip} is false for $\alpha=\omega$ is proved in Section 4. Finally, in Section 5 we show that an ideal $\I$ contains an isomorphic copy of $\fin'_\omega$ if and only if $\I$ contains isomorphic copies of all $\fin_n$, for $n\in\omega$. This result enables us to discuss a possible modification of Conjecture \ref{hip}.

\section{Preliminaries}

\subsection{Basic notions}

A collection $\mathcal{I}$ of subsets of a set $X$ is called an ideal on $X$ if it is closed under subsets and finite unions of its elements. In this case we denote the set $X$ by $\text{dom}(\I)$. All ideals considered in this paper are defined on infinite countable sets. We assume additionally that $\mathcal{P}(X)$ is not an ideal, and that every ideal contains all finite subsets of $X$. By $\fin$ we denote the ideal of all finite subsets of $\omega$.

We treat the power set $\mathcal{P}(X)$ as the space $2^X$ of all functions $f:X\rightarrow 2$ (equipped with the product topology, where each space $2=\left\{0,1\right\}$ carries the discrete topology) by identifying subsets of $X$ with their characteristic functions. Thus, we can talk about descriptive complexity of subsets of $\mathcal{P}(X)$ (in particular, ideals on $X$). 

For $\mathcal{A}\subseteq\mathcal{P}(X)$ we write
$$\mathcal{A}^\star=\{X\setminus A:\ A\in\mathcal{A}\}.$$
If $\I$ is an ideal on $X$ then $\I^\star$ is a filter (i.e., a family closed under supersets and finite intersections of its elements). We call it the dual filter of $\I$. Observe that the map $A\mapsto X\setminus A$ is continuous. Thus, $\I$ and $\I^\star$ have the same descriptive complexity. The above observation enables us to transfer the results of \cite{Debs} into the language of ideals.  

If $\I$ and $\J$ are ideals, then:
\begin{itemize} 
\item we say that $\J$ and $\I$ are isomorphic, if there is a bijection $f:\text{dom}(\J)\to \text{dom}(\I)$ such that 
$$\forall_{A\subseteq\text{dom}(\I)}\ f^{-1}[A]\in\J\ \Longleftrightarrow\ A\in\I;$$
\item we say that $\J$ contains an isomorphic copy of $\I$ and write $\I\sqsubseteq\J$, if there is a bijection $f:\text{dom}(\J)\to \text{dom}(\I)$ such that $f^{-1}[A]\in\J$ for each $A\in\I$;
\item we say that $\J$ is above $\I$ in the Kat\v{e}tov order and write $\I\leq_K\J$, if there is $f:\text{dom}(\J)\to \text{dom}(\I)$ (not necessary bijection) such that $f^{-1}[A]\in\J$ for each $A\in\I$.
\end{itemize}

We say that an ideal $\I$ on $X$ is generated by the family $\mathcal{F}\subseteq\mathcal{P}(X)$ if 
$$\I=\{A\subseteq X:\ \exists_{n\in\omega}\ \exists_{F_0,\ldots,F_n\in\mathcal{F}}\ A\subseteq F_0\cup\ldots\cup F_n\}.$$
If $Y\notin\I$ then the restriction of $\I$ to $Y$ is an ideal on $Y$ given by $\I|Y=\{A\cap Y:\ A\in\I\}$.

\subsection{Fubini sums of ideals}

If $\left(X_i\right)_{i\in I}$ is a family of sets, then by $\sum_{i\in I}X_i$ we denote its disjoint sum, i.e., the set of all pairs $(i,x)$ where $i\in I$ and $x\in X_i$.

Following \cite{Debs}, if $\I$ and $\J_i$, for $i\in \text{dom}(\I)$, are ideals, then the $\I$-Fubini sum of $(\J_i)_{i\in \text{dom}(\I)}$ is an ideal on $\sum_{i\in \text{dom}(\I)}\text{dom}(\J_i)$ defined by:
$$\I-\sum_{i\in \text{dom}(\I)}\J_i=\left\{\sum_{i\in \text{dom}(\I)}A_i:\ \left\{i\in \text{dom}(\I):\ A_i\notin\J_i\right\}\in\I\right\}.$$

If $\J_i=\J$ for all $i\in \text{dom}(\I)$ then $\I\otimes\J=\I-\sum_{i\in \text{dom}(\I)}\J_i$ is an ideal on $\text{dom}(\I)\times\text{dom}(\J)$ called the Fubini product of ideal $\I$ and $\J$. 

In \cite[p. 240]{Katetov} M. Kat\v{e}tov defined ideals $\fin^\alpha$, for $0<\alpha<\omega_1$ by:
\begin{itemize}
\item $\fin^1=\fin$, $\text{dom}(\fin^1)=\omega$;
\item $\fin^{\alpha+1}=\fin\otimes\fin^\alpha$, $\text{dom}(\fin^{\alpha+1})=\sum_{i\in\omega}\text{dom}(\fin^\alpha)$;
\item if $0<\lambda<\omega_1$ is a limit ordinal then $\fin^\lambda=\I_\lambda-\sum_{0<\alpha<\lambda}\fin^\alpha$ and $\text{dom}(\fin^{\lambda})=\sum_{0<\alpha<\lambda}\text{dom}(\fin^\alpha)$, where $\I_\lambda=\{A\subseteq\lambda\setminus\{0\}:\ \exists_{\alpha<\lambda}\ A\subseteq\alpha\}$ is an ideal on $\lambda\setminus\{0\}$;
\end{itemize}
(actually, paper \cite{Katetov} concerns filters instead of ideals). In particular, if $n\in\omega$ then $\fin^{n+1}=\fin\otimes\fin^n$ is an ideal on $\omega^{n+1}$. The ideals $\fin^\alpha$ will be useful in Subsection \ref{subsec} where we define ideals $\fin_\alpha$ (note the role of subscript and superscript in this notation).

\subsection{Inductive limits of ideals}

This subsection is a short summary of notions defined in \cite[Section 5]{Debs}, which we need to introduce in order to define the ideal $\fin_\omega$.

Following \cite[Definition 5.1]{Debs}, a quasi-homomorphism from $\mathcal{J}$ to $\mathcal{I}$ is a mapping $f:M\to {\rm dom}\left(\mathcal{I}\right)$, where $M\in\mathcal{J}^\star$, such that for each $A\in\mathcal{I}$ its preimage $f^{-1}[A]$ belongs to $\mathcal{J}$. It is easy to see that $\I\leq_K\J$ is equivalent to existence of a quasi-homomorphism from $\mathcal{J}$ to $\mathcal{I}$.

For a directed set $\left(I, \leq\right)$ and a family of ideals $\left(\mathcal{I}_i\right)_{i\in I}$, the family $\left(\pi_{i,j}\right)_{\stackrel{i\leq j}{i,j\in I}}$, where each $\pi_{i,j}$ is a quasi-homomorphism from $\mathcal{I}_j$ to $\mathcal{I}_i$, is called a system of quasi-homomorphisms for $\left(\mathcal{I}_i\right)_{i\in I}$.

\begin{definition}[see {\cite[Proposition-Definition 5.4]{Debs}}]
Let $\left(\pi_{i,j}\right)_{\stackrel{i\leq j}{i,j\in I}}$ be a system of quasi-homomorphisms for $\left(\mathcal{I}_i\right)_{i\in I}$. The set 
$$\underleftarrow{\lim}\left(\mathcal{I}_i, \pi_{i,j}\right)_{\stackrel{i\leq j}{i,j\in I}}=\left\{M\subseteq\sum_{i\in I}\text{dom}\left(\mathcal{I}_i\right):\ \exists_{i\in I}\ \exists_{P\in\mathcal{I}_i^\star}\ M\cap\left(\sum_{j\geq i} \pi_{i,j}^{-1}\left[P\right]\right)=\emptyset\right\}$$
is called the inductive limit of $\left(\mathcal{I}_i, \pi_{i,j}\right)_{\stackrel{i\leq j}{i,j\in I}}$. For simplicity, we write just $\underleftarrow{\lim}\I_i$ instead of $\underleftarrow{\lim}\left(\mathcal{I}_i, \pi_{i,j}\right)_{\stackrel{i\leq j}{i,j\in I}}$ whenever the system of quasi-homomorphisms is evident from the context.
\end{definition}

Inductive limits of ideals were introduced by G. Debs and J. Saint Raymond in order to define ideals $\fin_\alpha$ for limit ordinals $0<\alpha<\omega_1$ (see Subsection \ref{subsec}).

\subsection{Canonical ideals}
\label{subsec}

In this subsection we define ideals $\fin_\alpha$ for successor ordinals $0<\alpha<\omega_1$ and for $\alpha=\omega$. 

Observe that $\left(\pi_{i,j}\right)_{0<i\leq j<\omega}$, where $\text{dom}(\pi_{i,j})=\text{dom}(\fin^j)=\omega^j$ and $\pi_{i,j}$ is the projection onto the last $i$ coordinates, i.e., 
$$\pi_{i,j}(x_0,x_1,\ldots,x_{j-1})=(x_{j-i},x_{j-i+1},\ldots,x_{j-1}),$$ 
for all $(x_0,x_1,\ldots,x_{j-1})\in\omega^j$), is a system of quasi-homomorphisms for $\left(\fin^i\right)_{i\in \omega}$.

Following \cite[Section 6.2]{Debs}, we define:
\begin{itemize}
\item $\fin_1=\fin^1=\fin$;
\item $\fin_{\alpha+1}=\fin^{1+\alpha}$;
\item $\fin_\omega=\underleftarrow{\lim}(\fin^{n})_{n\in\omega}$.
\end{itemize}
Obviously, $\fin_n=\fin^n$ for all $n\in\omega$ and $\fin_{\alpha+1}=\fin^\alpha$ for all $\omega\leq\alpha<\omega_1$.

We do not introduce ideals $\fin_\lambda$ for limit ordinals $\omega<\lambda<\omega_1$. Except the facts that they are complicated objects and we will not use them in our considerations, the reason for omitting their definition is that those ideals were defined ambiguously in \cite[Section 6.2]{Debs}. Actually, that is also the case of $\fin_\omega$. In \cite[Section 6.2]{Debs} it is only written that since each $\fin^{n+1}$ is obtained as the $\fin$-Fubini sum of $\fin^{n}$, applying \cite[Proposition 5.8]{Debs} inductively one gets a system of quasi-homomorphisms
for $(\fin^n)_{n\in\omega}$. While from \cite[Proposition 5.8]{Debs} (and its proof) we see that any quasi-inductive system of ideals $\left(\mathcal{I}_i, \nu_{i,j}\right)_{\stackrel{i\leq j}{i,j\in\omega}}$ can be extended to $\left(\mathcal{I}_i, \nu_{i,j}\right)_{\stackrel{i\leq j}{i,j\in \omega+1}}$, where $\I_{\omega}$ is the $\fin$-Fubini sum of $(\I_i)_{i\in \omega}$ and $\nu_{i,\omega}=\sum_{j\geq i,j\in \omega}\nu_{i,j}$ is the disjoint union mapping. Thus, in order to get a quasi-homomorphism $\pi_{1,2}$ from $\fin^2$ to $\fin$ we need to put $\I_i=\fin$ and can guess that $\pi_{i,j}(x)=x$, for all $i,j\in\omega$, $i\leq j$ and $x\in\omega$. However, the problem is that applying \cite[Proposition 5.8]{Debs} we get infinitely many quasi-homomorphism from $\fin^2$ to $\fin$ -- each of them is given by the projection onto the second coordinate, but their domains differ (and are of the form $(\omega\setminus i)\times\omega$). Nothing is mentioned about which of those quasi-homomorphism we should choose. In the above definition of $\fin_\omega$ we always choose the quasi-homomorphism with the largest domain (which seems to be the most obvious choice). Fortunately, regardless of those issues, it seems that each version of $\fin_\lambda$, for a limit ordinals $0<\lambda<\omega_1$, possesses all the properties required in \cite{Debs} (provided that it is an ideal -- see the next section).

\section{When an inductive limit of ideals is an ideal?}

In this section we show that $\fin_\omega$ is an ideal despite the fact that \cite[Proposition-Definition 5.4]{Debs}, which defines it, is false. 

\begin{definition}[{\cite[Definition 5.3]{Debs}}]
A system of quasi-homomorphisms $\left(\pi_{i,j}\right)_{i\leq j}$ for a family of ideals $\left(\mathcal{I}_i\right)_{i\in I}$ is coherent provided that for all $i,j,k\in I$ with $i\leq j\leq k$ the following condition is satisfied:
$$\pi_{i,k}(a)=\pi_{i,j}\left(\pi_{j,k}(a)\right)$$
for every $a\in\text{dom}\left(\pi_{i,k}\right)\cap\text{dom}\left(\pi_{j,k}\right)\cap\pi_{j,k}^{-1}\left(\text{dom}\left(\pi_{i,j}\right)\right)$. In this case we call $\left(\mathcal{I}_i, \pi_{i,j}\right)_{\stackrel{i\leq j}{i,j\in I}}$ a coherent quasi-inductive system of ideals.
\end{definition}

Next example shows that an inductive limit of a coherent quasi-inductive system of ideals does not have to be an ideal. This means that \cite[Proposition-Definition 5.4]{Debs} is false.

\begin{example}
\label{ExIndLim}
Put $I=\omega\setminus\{0\}$ and consider the ideals $\fin^i$ for $i\in I$ (i.e., $\fin^1=\fin$, $\fin^2=\fin\otimes\fin$ and $\fin^{i+1}=\fin\otimes\fin^i$). For each $i>1$ let $F_i=\omega^i\setminus\left(\left\{0\right\}\times\omega^{i-1}\right)\in(\fin^i)^\star$ and define a quasi-homomorphism $\pi_{1,i}:F_i\to\omega$ from $\fin^i$ to $(\fin^1)^\star$ by $\pi_{1,i}(x_0,\ldots,x_{i-1})=x_{i-1}$ for all $(x_0,\ldots,x_{i-1})\in F_i$ (i.e., $\pi_{1,i}$ is the projection onto the last coordinate). For indices $1<i\leq j$ let $\pi_{i,j}:\omega^j\to\omega^i$ be the projection onto the last $i$ coordinates. Then $\left(\pi_{i,j}\right)_{i\leq j}$ is a coherent system of quasi-homomorphisms for the family of ideals $\left(\fin^i\right)_{i\in I}$. However, we will show that its inductive limit, let us denote it by $\mathcal{J}$, is not an ideal.

Consider $A=\sum_{j\geq 1}\pi_{1,j}^{-1}[\omega]$ and $B=\sum_{j\geq 2}\pi_{2,j}^{-1}[\omega^2]$. Obviously, $A^c,B^c\in\mathcal{J}$. We claim that 
$$A^c\cup B^c=(A\cap B)^c=\left(\Sigma_{j>1} F_j\right)^c$$
does not belong to $\mathcal{J}$. Indeed, if $i>1$ and $P\in(\fin^i)^\star$, then $\sum_{j\geq i}\pi_{i,j}^{-1}[P]\not\subseteq \Sigma_{j>1} F_j$ as for any $(x_0,\ldots,x_{i-1})\in P\subseteq\omega^i$ we have $\pi_{i,i+1}((0,x_0,\ldots,x_{i-1}))\in P$ and $(0,x_0,\ldots,x_{i-1})\notin F_{i+1}$. On the other hand, if $i=1$ then $\sum_{j\geq i}\pi_{i,j}^{-1}[P]\not\subseteq A\cap B$ as $(\{1\}\times P)\subseteq \sum_{j\geq i}\pi_{i,j}^{-1}[P]$ and $(\{1\}\times P)\cap \left(\Sigma_{j>1} F_j\right)=\emptyset$. This shows that $\mathcal{J}$ is not an ideal.
\end{example}

Now we want to focus on showing that $\fin_\omega$ is an ideal. We will need the following notion.

\begin{definition}
We say that a quasi-inductive system $\left(\mathcal{I}_i, \pi_{i,j}\right)_{\stackrel{i\leq j}{i,j\in I}}$ satisfies condition (C) if it is coherent and
$$\forall_{k\geq j}\ \pi^{-1}_{j,k}\left({\rm dom}(\pi_{i,j})\right)\subseteq{\rm dom}(\pi_{i,k})$$
for all indices $i\leq j$. 
\end{definition}

\begin{proposition}
Let $\left(\mathcal{I}_i, \pi_{i,j}\right)_{\stackrel{i\leq j}{i,j\in I}}$ be a quasi-inductive system satisfying condition (C). Then $\underleftarrow{\lim}\mathcal{I}_i$ is an ideal.
\end{proposition}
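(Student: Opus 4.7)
The plan is to verify the three defining properties of an ideal for $\mathcal{J}:=\underleftarrow{\lim}\mathcal{I}_i$: closure under subsets, closure under finite unions, and containment of all finite subsets. Closure under subsets is immediate, since if $M'\subseteq M$ is disjoint from some $\sum_{j\geq i}\pi_{i,j}^{-1}[P]$, so is every subset. That $\mathcal{J}$ contains every finite $F\subseteq\sum_{i\in I}\text{dom}(\mathcal{I}_i)$ follows by fixing any $i\in I$, observing that only finitely many slices $F_j$ with $j\geq i$ are nonempty, and taking $P$ to be the complement in $\text{dom}(\mathcal{I}_i)$ of the finite set $\bigcup_{j\geq i}\pi_{i,j}[F_j\cap\text{dom}(\pi_{i,j})]\in\mathcal{I}_i$.

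The whole content of the statement is therefore closure under finite unions -- which is exactly what Example \ref{ExIndLim} shows can fail without condition (C). Suppose $M_1,M_2\in\mathcal{J}$ with witnesses $(i_1,P_1)$ and $(i_2,P_2)$. Using directedness of $I$, pick $i\geq i_1,i_2$ and set
$$P=\pi_{i_1,i}^{-1}[P_1]\cap\pi_{i_2,i}^{-1}[P_2].$$
For each $k\in\{1,2\}$ the complement of $\pi_{i_k,i}^{-1}[P_k]$ in $\text{dom}(\mathcal{I}_i)$ is the union of $\text{dom}(\pi_{i_k,i})^c\in\mathcal{I}_i$ and $\pi_{i_k,i}^{-1}[P_k^c]\in\mathcal{I}_i$, so $\pi_{i_k,i}^{-1}[P_k]\in\mathcal{I}_i^\star$ and hence $P\in\mathcal{I}_i^\star$. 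I will show that for every $j\geq i$,
$$\pi_{i,j}^{-1}[P]\subseteq\pi_{i_1,j}^{-1}[P_1]\cap\pi_{i_2,j}^{-1}[P_2].$$
Summing this over $j\geq i$ and using $i\geq i_1,i_2$ gives $\sum_{j\geq i}\pi_{i,j}^{-1}[P]\subseteq\sum_{j\geq i_1}\pi_{i_1,j}^{-1}[P_1]\cap\sum_{j\geq i_2}\pi_{i_2,j}^{-1}[P_2]$, which is disjoint from $M_1\cup M_2$ and witnesses $M_1\cup M_2\in\mathcal{J}$.

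The inclusion above is the only nontrivial step, and it is where condition (C) does all the work. Fix $x\in\pi_{i,j}^{-1}[P]$ and $k\in\{1,2\}$. Then $x\in\text{dom}(\pi_{i,j})$ and $\pi_{i,j}(x)\in\pi_{i_k,i}^{-1}[P_k]\subseteq\text{dom}(\pi_{i_k,i})$. Condition (C) applied to the chain $i_k\leq i\leq j$ gives $x\in\pi_{i,j}^{-1}(\text{dom}(\pi_{i_k,i}))\subseteq\text{dom}(\pi_{i_k,j})$, so that $x$ lies in the domain where the coherence equation for the triple $i_k\leq i\leq j$ makes sense; coherence then yields $\pi_{i_k,j}(x)=\pi_{i_k,i}(\pi_{i,j}(x))\in P_k$, as needed.

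The main (and essentially only) obstacle is the last step: coherence alone does not let us write $\pi_{i_k,j}(x)=\pi_{i_k,i}(\pi_{i,j}(x))$ because we have no a priori guarantee that $x\in\text{dom}(\pi_{i_k,j})$, and without this one cannot conclude $x\in\pi_{i_k,j}^{-1}[P_k]$. Condition (C) is precisely tailored to supply this missing domain inclusion, which is why Example \ref{ExIndLim} (where (C) fails for $j=1$) genuinely produces an inductive limit that is not an ideal.
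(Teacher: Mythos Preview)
Your proof is correct and follows essentially the same approach as the paper. The paper packages the argument slightly differently---it writes $\underleftarrow{\lim}\mathcal{I}_i=\bigcup_{i\in I}\overline{\mathcal{I}}_i$ for suitable ideals $\overline{\mathcal{I}}_i$ and shows this union is directed via $\overline{\mathcal{I}}_i\subseteq\overline{\mathcal{I}}_j$ for $i\leq j$---but the key technical step there (proving $\overline{P'}\subseteq\overline{P}$ where $P'=\pi_{i,j}^{-1}[P]$) is exactly your inclusion $\pi_{i,j}^{-1}[\pi_{i_k,i}^{-1}[P_k]]\subseteq\pi_{i_k,j}^{-1}[P_k]$, obtained via condition~(C) followed by coherence.
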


\begin{proof}
Define ideals
$$\overline{\mathcal{I}}_i=\left\{M\subseteq\Sigma_{k\in I}{\rm dom}\left(\mathcal{I}_k\right):\ \exists_{P\in\mathcal{I}^\star_i}\ M\cap\overline{P}=\emptyset\right\},$$
where for $i\in I$ and $P\in\mathcal{I}^\star_i$ we denote $\overline{P}=\sum_{j\geq i}\pi_{i,j}^{-1}[P]$. 

Observe that
$$\underleftarrow{\lim}\mathcal{I}_i=\bigcup_{i\in I}\overline{\mathcal{I}}_i.$$
We will show that $\overline{\mathcal{I}}_i\subseteq\overline{\mathcal{I}}_j$ provided that $i\leq j$. It will follow that $\underleftarrow{\lim}\mathcal{I}_i$ is an ideal as a union of an increasing family of ideals.

To show that $\overline{\mathcal{I}}_i\subseteq\overline{\mathcal{I}}_j$ whenever $i\leq j$, fix $M\in\overline{\mathcal{I}}_i$. Then there is $P\in\mathcal{I}^\star_i$ such that $M\cap\overline{P}=\emptyset$. Set $P'=\pi_{i,j}^{-1}\left[P\right]\in\mathcal{I}^\star_j$. We will show that $\overline{P}'\subseteq\overline{P}$ (which will end the proof as $\overline{P}'\cap M\subseteq\overline{P}\cap M=\emptyset$). If $k\geq j$ and $x\in\pi_{j,k}^{-1}\left[P'\right]$ (so $(k,x)$ is an element of $\overline{P}'$) then $x\in\pi^{-1}_{j,k}\left({\rm dom}(\pi_{i,j})\right)$ (as $P'\subseteq{\rm dom}(\pi_{i,j})$). By condition (C) we know that $x\in{\rm dom}(\pi_{i,k})$. Now, using the fact that $\left(\mathcal{I}_i, \pi_{i,j}\right)_{\stackrel{i\leq j}{i,j\in I}}$ is coherent, we get $\pi_{i,k}(x)=\pi_{i,j}(\pi_{j,k}(x))\in P$. Thus, $x$ is an element of $\pi_{i,k}^{-1}\left[P\right]$ and $(k,x)$ is an element of $\overline{P}$. This finishes the proof.
\end{proof}

\begin{corollary}
$\fin_\omega$ is an ideal.
\end{corollary}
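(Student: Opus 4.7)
The plan is to verify that the specific quasi-inductive system used to define $\fin_\omega$ in Subsection \ref{subsec} satisfies condition (C), and then invoke the previous proposition.

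First I would recall the data: the index set is $\omega\setminus\{0\}$, the ideals are $\fin^i$ with $\text{dom}(\fin^i)=\omega^i$, and for $i\leq j$ the quasi-homomorphism $\pi_{i,j}:\omega^j\to\omega^i$ is projection onto the last $i$ coordinates, i.e., $\pi_{i,j}(x_0,\ldots,x_{j-1})=(x_{j-i},\ldots,x_{j-1})$. Crucially, every $\pi_{i,j}$ is defined on \emph{all} of $\omega^j$, so $\text{dom}(\pi_{i,j})=\omega^j$.

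Next I would verify coherence directly: for $i\leq j\leq k$ and any $a=(x_0,\ldots,x_{k-1})\in\omega^k$,
\[
\pi_{i,j}(\pi_{j,k}(a))=\pi_{i,j}(x_{k-j},\ldots,x_{k-1})=(x_{k-i},\ldots,x_{k-1})=\pi_{i,k}(a),
\]
so the system is coherent (and the auxiliary intersection of domains in the definition of coherence is simply $\omega^k$, so there is nothing to worry about).

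Then I would check condition (C): for $i\leq j\leq k$ we have
\[
\pi_{j,k}^{-1}(\text{dom}(\pi_{i,j}))=\pi_{j,k}^{-1}(\omega^j)=\omega^k=\text{dom}(\pi_{i,k}),
\]
so the required inclusion is trivial. Having verified condition (C), the previous proposition immediately yields that $\underleftarrow{\lim}(\fin^i,\pi_{i,j})=\fin_\omega$ is an ideal.

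There is essentially no obstacle here; the whole point of isolating condition (C) in the previous proposition was to reduce the verification to a triviality for the canonical system defining $\fin_\omega$. The only mild subtlety worth a sentence is to emphasize the contrast with Example \ref{ExIndLim}: there the quasi-homomorphisms $\pi_{1,i}$ had \emph{restricted} domains $F_i\subsetneq\omega^i$, which is precisely what causes condition (C) to fail and the inductive limit not to be an ideal. For $\fin_\omega$, by always taking the projections with full domain $\omega^j$, this pathology is avoided.
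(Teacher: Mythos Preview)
Your proof is correct and takes essentially the same approach as the paper: verify that the canonical system of projections satisfies condition (C) (which is trivial since all domains are full) and apply the preceding proposition. You simply spell out in detail what the paper dismisses as ``easy to show.''
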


\begin{proof}
It is easy to show that $\left(\pi_{i,j}\right)_{0<i\leq j<\omega}$ defined in Subsection \ref{subsec} is a system of quasi-homomorphisms for $\left(\fin^i\right)_{i\in \omega}$ satisfying condition (C). Hence, the previous result implies that $\fin_\omega$ is an ideal.
\end{proof}

We end this section with a modification of \cite[Proposition 5.8]{Debs}.

\begin{proposition}
\label{C}
Let $\left(\mathcal{I}_i, \pi_{i,j}\right)_{\stackrel{i\leq j}{i,j\in I}}$ be a quasi-inductive system satisfying condition (C), $\J$ be the ideal on $I$ generated by the family $\{\{j\in I:\ j\leq i\}:\ i\in I\}$, and $\mathcal{I}_\infty$ denote the $\J$-Fubini sum of $\left(\mathcal{I}_i\right)_{i\in I}$. If $\pi_{i,\infty}:\sum_{k\geq i}{\rm dom}(\pi_{i,k})\to{\rm dom}(\I_i)$ are given by $\pi_{i,\infty}=\sum_{j\geq i}\pi_{i,j}$, then $\left(\mathcal{I}_i, \pi_{i,j}\right)_{\stackrel{i\leq j}{i,j\in I\cup\left\{\infty\right\}}}$ is also a quasi-inductive system satisfying condition (C).
\end{proposition}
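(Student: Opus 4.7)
The plan is to verify three things about the extended family $(\I_i,\pi_{i,j})_{i\leq j,\,i,j\in I\cup\{\infty\}}$: that each $\pi_{i,\infty}$ is a quasi-homomorphism from $\I_\infty$ to $\I_i$, that the enlarged system is coherent, and that it satisfies condition (C). Treating $\pi_{\infty,\infty}$ as the identity, the triples/pairs involving $\infty$ reduce to a small number of genuinely new cases.

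First I would check that $\pi_{i,\infty}$ is a quasi-homomorphism. Its domain $D_i=\sum_{k\geq i}{\rm dom}(\pi_{i,k})$ has complement $\sum_{k\in I}A_k$ in $\sum_{k\in I}{\rm dom}(\I_k)$, where $A_k={\rm dom}(\I_k)\setminus{\rm dom}(\pi_{i,k})\in\I_k$ for $k\geq i$ (since ${\rm dom}(\pi_{i,k})\in\I_k^\star$) and $A_k={\rm dom}(\I_k)\notin\I_k$ otherwise. Thus $\{k\in I:A_k\notin\I_k\}=\{k\in I:k\not\geq i\}$, and this set lies in $\J$; this is the one place where the structure of $I$ enters, and in the intended application $I=\omega$ the set equals $\{0,1,\ldots,i-1\}\subseteq\{k:k\leq i\}\in\J$. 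Hence $D_i\in\I_\infty^\star$. For the preimage condition, given $A\in\I_i$ I compute $\pi_{i,\infty}^{-1}[A]=\sum_{k\geq i}\pi_{i,k}^{-1}[A]$, whose $k$-slice lies in $\I_k$ for $k\geq i$ (as each $\pi_{i,k}$ is a quasi-homomorphism from $\I_k$ to $\I_i$) and is empty otherwise, so its set of bad slice indices is empty, and therefore $\pi_{i,\infty}^{-1}[A]\in\I_\infty$.

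Second, I would check coherence. The only genuinely new triples are $i\leq j<\infty$ with top index $\infty$. Given $a=(l,x)\in{\rm dom}(\pi_{i,\infty})\cap{\rm dom}(\pi_{j,\infty})\cap\pi_{j,\infty}^{-1}({\rm dom}(\pi_{i,j}))$, the definitions yield $l\geq j$, $x\in{\rm dom}(\pi_{j,l})$, $\pi_{j,l}(x)\in{\rm dom}(\pi_{i,j})$, and by condition (C) applied in the original system also $x\in{\rm dom}(\pi_{i,l})$. Coherence of the original system at $(i,j,l)$ then gives $\pi_{i,l}(x)=\pi_{i,j}(\pi_{j,l}(x))$, which unpacks to $\pi_{i,\infty}(a)=\pi_{i,j}(\pi_{j,\infty}(a))$.

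Third, for condition (C), the only new case is $\pi_{j,\infty}^{-1}({\rm dom}(\pi_{i,j}))\subseteq{\rm dom}(\pi_{i,\infty})$ for $i\leq j$ in $I$. Writing the left-hand side as $\sum_{l\geq j}\pi_{j,l}^{-1}({\rm dom}(\pi_{i,j}))$ and applying condition (C) of the original system slice by slice yields $\pi_{j,l}^{-1}({\rm dom}(\pi_{i,j}))\subseteq{\rm dom}(\pi_{i,l})$ for each $l\geq j\geq i$, so the union embeds into $\sum_{l\geq i}{\rm dom}(\pi_{i,l})={\rm dom}(\pi_{i,\infty})$. The main technical point is the first step — confirming $\{k\in I:k\not\geq i\}\in\J$ — which is automatic in the $I=\omega$ setting used for $\fin_\omega$ in Subsection \ref{subsec} (and more generally whenever every element of $I$ has a $\J$-small set of non-successors above it).
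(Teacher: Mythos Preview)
Your argument is correct and, for the condition (C) part, essentially identical to the paper's: both compute $\pi_{j,\infty}^{-1}({\rm dom}(\pi_{i,j}))=\sum_{l\geq j}\pi_{j,l}^{-1}({\rm dom}(\pi_{i,j}))$ and apply condition (C) slicewise to embed this in $\sum_{l\geq i}{\rm dom}(\pi_{i,l})={\rm dom}(\pi_{i,\infty})$. The only difference is scope: the paper defers the verification that each $\pi_{i,\infty}$ is a quasi-homomorphism and that the enlarged system is coherent to \cite[Proposition 5.8]{Debs}, whereas you supply these checks directly; your side remark that $\{k\in I:k\not\geq i\}\in\J$ is not automatic for arbitrary directed $I$ is a fair caveat the paper does not discuss, but it is immaterial in the $I=\omega$ setting actually used.
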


\begin{proof}
In \cite[Proposition 5.8]{Debs} it is shown that $\left(\mathcal{I}_i, \pi_{i,j}\right)_{\stackrel{i\leq j}{i,j\in I\cup\left\{\infty\right\}}}$ is a coherent system. We need to show that 
$$\forall_{k\geq j}\ \pi^{-1}_{j,k}\left({\rm dom}(\pi_{i,j})\right)\subseteq{\rm dom}(\pi_{i,k})$$
for all indices $i\leq j$. 

Set $i\leq j$, $i,j\in I$. Observe that 
$$\pi^{-1}_{j,\infty}\left({\rm dom}(\pi_{i,j})\right)=\sum_{k\geq j}\pi^{-1}_{j,k}\left({\rm dom}(\pi_{i,j})\right).$$
Applying condition (C) we obtain that
$$\sum_{k\geq j}\pi^{-1}_{j,k}\left({\rm dom}(\pi_{i,j})\right)\subseteq\sum_{k\geq j}{\rm dom}(\pi_{i,k})\subseteq\sum_{k\geq i}{\rm dom}(\pi_{i,k})={\rm dom}(\pi_{i,\infty}),$$
which finishes the proof.
\end{proof}

\section{The counterexample}

This section is devoted to showing that Conjecture \ref{hip} is false in the case of $\alpha=\omega$. Our counterexample is defined as follows.

\begin{definition}
\label{maindef}
Fix a family $\{\{X_s:s\in\omega^{n+1}\}:n\in\omega\}$ of partitions of $\omega$ into infinite sets (i.e., for every $n\in\omega$ each $X_s$, for $s\in\omega^{n+1}$, is infinite and $\{X_s:s\in\omega^{n+1}\}$ is a partition of $\omega$) such that $\bigcap_{i\in F}X_{s_i}$ is infinite for all $F\in[\omega]^{<\omega}$ and $s_i\in\omega^{i+1}$, $i\in F$. Then $\fin'_\omega$ is an ideal on $\omega$ generated by the family $\bigcup_{n\in\omega}\fin^{n+2}(\{X_s:s\in\omega^{n+1}\})$, where $\fin^{n+2}(\{X_s:s\in\omega^{n+1}\})$ is an isomorphic copy of $\fin^{n+2}$ on $\omega$ given by any bijection $\sigma:\omega^{n+2}\to\omega$ such that $\sigma[\{s\}\times\omega]=X_s$ for all $s\in\omega^{n+1}$ (note that each such bijection defines the same ideal on $\omega$).
\end{definition}

Before proving that $\fin'_\omega$ is as needed, we need to establish some of its properties which will be helpful in our considerations.

For a family $\mathcal{A}\subseteq\mathcal{P}(X)$ we denote $\mathcal{A}^\uparrow=\{B\subseteq X: \exists_{A\in\mathcal{A}}B\supseteq A\}$.

\begin{definition}
Define 
$$\J_n=\left\{A\subseteq\omega:\ \phi_n(A)\in\left(\fin^\star\times\ldots\times(\fin^{n+1})^\star\right)^\uparrow\right\},$$
where $\phi_n:\mathcal{P}(\omega)\to\mathcal{P}(\omega\times\omega^{2}\times\ldots\times\omega^{n+1})$ is given by 
$$\phi_n(A)=\left\{(s_0,\ldots,s_n)\in\omega\times\omega^2\times\ldots\times\omega^{n+1}:\ A\cap X_{s_0}\cap\ldots\cap X_{s_n}\in\fin\right\}.$$
\end{definition}

\begin{lemma}
\label{lem}
$\fin'_\omega=\bigcup_{n\in\omega}\J_n$ and $\J_n\subseteq\J_{n+1}$, for each $n\in\omega$.
\end{lemma}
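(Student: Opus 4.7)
My plan is to prove the two claims of the lemma separately. Monotonicity $\J_n\subseteq\J_{n+1}$ is immediate: if $P_0,\ldots,P_n$ witness $A\in\J_n$, then $P_0,\ldots,P_n,\omega^{n+2}$ witness $A\in\J_{n+1}$, because $A\cap X_{s_0}\cap\cdots\cap X_{s_{n+1}}\subseteq A\cap X_{s_0}\cap\cdots\cap X_{s_n}\in\fin$ for every tuple.

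For $\fin'_\omega\subseteq\bigcup_n\J_n$ I will first unwind the Fubini-product definition of $\fin^{n+2}=\fin\otimes\fin^{n+1}$ through the bijection $\sigma$ from Definition \ref{maindef} to record the working characterization
\[
C\in\fin^{n+2}(\{X_s:s\in\omega^{n+1}\})\ \Longleftrightarrow\ \{s\in\omega^{n+1}:C\cap X_s\notin\fin\}\in\fin^{n+1}.
\]
Taking $P_i=\omega^{i+1}$ for $i<n$ and $P_n$ equal to the complement of the bad set above exhibits every generator of $\fin'_\omega$ as an element of $\J_n$. Since each $\J_n$ is an ideal and the $\J_n$'s are increasing, every finite union of generators lies in a single $\J_N$, so $\fin'_\omega\subseteq\bigcup_n\J_n$.

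The reverse inclusion is the substantive part. Given $A\in\J_n$ with witnesses $P_0,\ldots,P_n$ and $F_i=\omega^{i+1}\setminus P_i\in\fin^{i+1}$, put $B_i=\bigcup_{s\in F_i}X_s$ and $A'=A\setminus\bigcup_{i\leq n}B_i$. By the characterization each $B_i\in\fin^{i+2}(\{X_s:s\in\omega^{i+1}\})\subseteq\fin'_\omega$ (its bad set is exactly $F_i$), so it suffices to show $A'\in\fin'_\omega$. Crucially, $A'$ now satisfies the stronger condition $A'\cap X_{s_0}\cap\cdots\cap X_{s_n}\in\fin$ for \emph{every} tuple $(s_0,\ldots,s_n)\in\omega\times\omega^2\times\cdots\times\omega^{n+1}$, since whenever some $s_i\in F_i$ we have $A'\cap X_{s_i}=\emptyset$, and otherwise the tuple lies in $P_0\times\cdots\times P_n$ and the $\J_n$-hypothesis on $A$ applies.

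To decompose such an $A'$ I fix bijections $\omega^{i+1}\cong\omega$ (inducing linear orders with finite initial segments) and injections $h_i\colon\omega^{i+2}\times\cdots\times\omega^{n+1}\to\omega^{i+1}$ for $i<n$. For $x\in A'$ write $s_j(x)$ for the unique $s\in\omega^{j+1}$ with $x\in X_s$, and for $i<n$ put $x\in D_i$ iff $s_j(x)\leq h_j(s_{j+1}(x),\ldots,s_n(x))$ for every $j<i$ and $s_i(x)>h_i(s_{i+1}(x),\ldots,s_n(x))$; collect the remaining points of $A'$ into $D_n$. The $D_i$'s partition $A'$ by construction. For fixed $s_i\in\omega^{i+1}$, injectivity of $h_i$ confines $(s_{i+1}(x),\ldots,s_n(x))$ to the finite set $h_i^{-1}(\{t<s_i\})$, and the chain of upper bounds $s_j(x)\leq h_j(\cdots)$ for $j<i$ then recursively confines $(s_0(x),\ldots,s_{i-1}(x))$ to a finite set as well. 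Consequently $D_i\cap X_{s_i}$ is contained in a finite union of the finite sets $A'\cap X_{s_0}\cap\cdots\cap X_{s_n}$, so $\{s\in\omega^{i+1}:D_i\cap X_s\notin\fin\}=\emptyset\in\fin^{i+1}$, whence $D_i\in\fin^{i+2}(\{X_s:s\in\omega^{i+1}\})$ by the characterization; the same calculation disposes of $D_n$. Thus $A'=D_0\cup\cdots\cup D_n$ lies in $\fin'_\omega$, and so does $A$. The main obstacle is precisely the design of this stratification: because the partitions at distinct levels are only independent (not nested), no decomposition based on a single level can succeed, and the injections $h_i$ are needed to coordinate strict and non-strict inequalities across all $n+1$ levels simultaneously so that each stratum becomes finite on every cell of its own partition.
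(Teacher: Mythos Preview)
Your argument is correct. The overall architecture matches the paper's: both prove $\J_n\subseteq\J_{n+1}$ trivially, both handle $\fin'_\omega\subseteq\bigcup_n\J_n$ by exhibiting product witnesses from the generators, and for the reverse inclusion both reduce to the combinatorial core that any set $A'$ with $A'\cap X_{s_0}\cap\cdots\cap X_{s_n}\in\fin$ for \emph{every} tuple lies in $\fin'_\omega$.

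The genuine difference is in that core step. The paper proves it by induction on $n$: fixing bijections $h\colon\omega\to\omega^{n+1}$ and $g\colon\omega\to\omega\times\cdots\times\omega^n$, it splits $A'$ into $C=\bigcup_k\bigcup_{j\leq k}A'\cap X_{h(k)}\cap Y_{g(j)}$ and $D=\bigcup_j\bigcup_{k\leq j}A'\cap X_{h(k)}\cap Y_{g(j)}$ (where $Y_{(s_0,\ldots,s_{n-1})}=X_{s_0}\cap\cdots\cap X_{s_{n-1}}$), so that $C\cap X_{h(k)}\in\fin$ gives $C\in\fin^{n+2}(\{X_s:s\in\omega^{n+1}\})$ directly, while $D\cap Y_{g(j)}\in\fin$ feeds into the inductive hypothesis for $n-1$. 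You instead perform a single $(n{+}1)$-way stratification $A'=D_0\cup\cdots\cup D_n$, using the injections $h_i$ and the ``first strict violation'' index to land each $D_i$ in $\fin^{i+2}(\{X_s:s\in\omega^{i+1}\})$ in one shot. Your approach is essentially an unwinding of the paper's recursion; it buys an explicit, non-inductive decomposition at the cost of the extra bookkeeping with the family $(h_i)_{i<n}$, whereas the paper's two-piece inductive split keeps each step lighter but hides the final shape of the pieces.
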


\begin{proof}
The inclusion $\J_n\subseteq\J_{n+1}$, for each $n\in\omega$, is obvious. Thus, we only prove $\fin'_\omega=\bigcup_{n\in\omega}\J_n$.

Let $A\in\fin'_\omega$. Then there are $n\in\omega$ and sets $A_i$, for $i\leq n$, such that $A=\bigcup_{i\leq n}A_i$ and $A_i\in\fin^{i+2}(\{X_s:s\in\omega^{i+1}\})$, for all $i\leq n$. Thus, for each $i\leq n$ there is $B_i\in\fin^{i+1}$ such that $A_i\cap X_s\notin\fin$ if and only if $s\in B_i$, for all $s\in \omega^{i+1}$. Observe that $(B_0)^c\times\ldots\times (B_n)^c\subseteq\phi_n(A)$, since for each $(s_0,\ldots,s_n)\in (B_0)^c\times\ldots\times (B_n)^c$ we have 
$$A\cap X_{s_0}\cap\ldots\cap X_{s_n}=\bigcup_{i\leq n}\left(A_i\cap X_{s_0}\cap\ldots\cap X_{s_n}\right)\subseteq\bigcup_{i\leq n}A_i\cap X_{s_i}\in\fin,$$
i.e., $(s_0,\ldots,s_n)\in\phi_n(A)$. As $(B_0)^c\times\ldots\times (B_n)^c\in \fin^\star\times\ldots\times(\fin^{n+1})^\star$ and $(\fin^\star\times\ldots\times(\fin^{n+1})^\star)^\uparrow$ is closed under supersets, we conclude that $\phi_n(A)\in(\fin^\star\times\ldots\times(\fin^{n+1})^\star)^\uparrow$.

Let $n\in\omega$ and $A\subseteq\omega$ be such that $\phi_n(A)\in(\fin^\star\times\ldots\times(\fin^{n+1})^\star)^\uparrow$. Then there are $B_0,\ldots,B_n$ such that $\phi_n(A)\supseteq(B_0)^c\times\ldots\times (B_n)^c$ and $B_i\in\fin^{i+1}$, for all $i\leq n$. Define $A_i=\bigcup_{s\in B_i}X_s$ for all $i\leq n$. Obviously, $A_i\in\fin^{i+2}(\{X_s:s\in\omega^{i+1}\})$, for all $i\leq n$, i.e., $A'=\bigcup_{i\leq n}A_i\in\fin'_\omega$. Moreover, $(A\setminus A')\cap X_{s_0}\cap\ldots\cap X_{s_n}\in\fin$ for all $(s_0,\ldots,s_n)\in \omega\times\ldots\times\omega^{n+1}$. 

To finish the proof, we will inductively show that for each $B\subseteq\omega$ and $n\in\omega$, if $B\cap X_{s_0}\cap\ldots\cap X_{s_n}\in\fin$ for all $(s_0,\ldots,s_n)\in \omega\times\ldots\times\omega^{n+1}$, then $B\in\fin'_\omega$.

For $n=0$ this is obvious as $B\cap X_{s_0}\in\fin$ for all $s_0\in \omega$ implies $B\in\fin^{2}(\{X_s:s\in\omega\})\subseteq\fin'_\omega$.

Assume that the induction hypothesis is true for all $k<n$ and fix $B\subseteq\omega$ with $B\cap X_{s_0}\cap\ldots\cap X_{s_n}\in\fin$ for all $(s_0,\ldots,s_n)\in \omega\times\ldots\times\omega^{n+1}$. Fix two bijections $h:\omega\to\omega^{n+1}$ and $g:\omega\to\omega\times\ldots\times\omega^{n}$ and for each $(s_0,\ldots,s_{n-1})\in \omega\times\ldots\times\omega^{n}$ denote 
$$Y_{(s_0,\ldots,s_{n-1})}=X_{s_0}\cap\ldots\cap X_{s_{n-1}}$$
(note that $(Y_{g(j)})_{j\in\omega}$ is a partition of $\omega$). Define $C=\bigcup_{k\in\omega}\bigcup_{j\leq k}B\cap X_{h(k)}\cap Y_{g(j)}$ and $D=\bigcup_{j\in\omega}\bigcup_{k\leq j}B\cap X_{h(k)}\cap Y_{g(j)}$. Then $D$ is in $\fin'_\omega$ by the induction hypothesis, as $D\cap Y_{g(j)}\in\fin$ for all $j\in\omega$. Similarly, $C\in\fin^{n+2}(\{X_s:s\in\omega^{n+1}\})$ as $C\cap X_{h(k)}\in\fin$ for all $k\in\omega$. Finally, $B=C\cup D$ as for each $x\in B$ we can find a unique $(s_0,\ldots,s_n)\in \omega\times\ldots\times\omega^{n+1}$ with $x\in X_{s_0}\cap\ldots\cap X_{s_n}$ and either $h^{-1}(s_n)\leq g^{-1}((s_0,\ldots,s_{n-1}))$ (in this case $x\in D$) or $h^{-1}(s_n)\geq g^{-1}((s_0,\ldots,s_{n-1}))$ (in this case $x\in C$).
\end{proof}

\begin{lemma}
\label{Borelcomplexity}
Let $n\in\omega$. Then $\J_n$ is a $\bf{\Sigma^0_{2n+4}}$ ideal. In particular, $\rk(\J_n)\leq 2n+4$.
\end{lemma}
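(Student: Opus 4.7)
The plan is to directly compute the Borel complexity of $\J_n$ by unfolding its defining condition into an arithmetic formula with exactly $2n+4$ alternating quantifier blocks, after which the rank bound is free: since $\J_n$ is a proper ideal (closure under subsets and finite unions follows from the fact that $\phi_n$ turns finite unions into intersections and $\mathcal{F}_n:=(\fin^\star\times\ldots\times(\fin^{n+1})^\star)^\uparrow$ is a filter, while properness follows from $\J_n\subseteq\fin'_\omega$ using the independence of the partitions in Definition~\ref{maindef}), taking $\mathcal{S}=\J_n$ in the definition of rank yields $\rk(\J_n)\leq 2n+3\leq 2n+4$.

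First I would rewrite membership in a workable form: $A\in\J_n$ iff there exist $G_0\in\fin$, $G_1\in\fin^2$, \ldots, $G_n\in\fin^{n+1}$ such that for every $(s_0,\ldots,s_n)\in\omega\times\omega^2\times\ldots\times\omega^{n+1}$ with $s_i\notin G_i$ for every $i\leq n$, the set $A\cap X_{s_0}\cap\ldots\cap X_{s_n}$ is finite. This is the direct unfolding of $\phi_n(A)\supseteq F_0\times\ldots\times F_n$ with $F_i=\omega^{i+1}\setminus G_i\in(\fin^{i+1})^\star$. Since the condition is monotone in each $G_i$ (enlarging $G_i$ only weakens the requirement), I may replace each $G_i$ by its ``canonical maximum'' element of $\fin^{i+1}$: such an element is determined by a constant $N_i\in\omega$ together with Skolem functions $f_i^{(j)}:\omega^{j-1}\to\omega$ for $2\leq j\leq i+1$, and the condition $s_i=(y^{(1)},\ldots,y^{(i+1)})\notin G_i$ becomes $y^{(1)}\geq N_i$ and $y^{(j)}\geq f_i^{(j)}(y^{(1)},\ldots,y^{(j-1)})$ for every $j\geq 2$. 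I then apply the standard Skolem swap: since $f_i^{(j)}$ appears only as $f_i^{(j)}(y_i^{(1)},\ldots,y_i^{(j-1)})$, its existential quantifier may be pushed past $\forall y_i^{(1)}\cdots\forall y_i^{(j-1)}$ and collapses to an $\exists$ over a single natural number sitting between $\forall y_i^{(j-1)}$ and $\forall y_i^{(j)}$.

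The core of the proof is then a careful count of quantifier blocks. Grouping all $\forall y_i^{(j)}$ for the various admissible $i$ into a single ``level $j$'' $\forall$-block, and putting all level-$j$ Skolem existentials into a single $\exists$-block, produces the prefix $\exists(N_0,\ldots,N_n)\mid\forall_1\mid\exists_1\mid\forall_2\mid\ldots\mid\exists_n\mid\forall_{n+1}\mid\exists L\mid\forall k$, where the tail $\exists L\,\forall k$ expresses ``$A\cap X_{s_0}\cap\ldots\cap X_{s_n}\in\fin$''; this is $1+(2n+1)+2=2n+4$ alternating blocks starting with $\exists$, so $\J_n\in\bf{\Sigma^0_{2n+4}}$. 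The main (and essentially only) subtlety is this block-counting step: inserting the Skolem existentials naively, one $\exists$-block per function, yields a much coarser bound of order $\bf{\Sigma^0_{n^2}}$; the trick is to group the Skolems by ``level $j$'' across the different indices $i$, rather than keeping the parameterizations of the distinct $G_i$'s separate.
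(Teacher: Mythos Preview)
Your quantifier-counting strategy breaks down at the Skolem swap, and the break is not cosmetic. When you group the universal variables by level $j$ across \emph{all} indices $i$, the existential witness $m_i^{(j)}$ you insert after the block $\forall_{j-1}$ now lies inside the scope of every $\forall y_{i'}^{(j')}$ with $j'<j$, including those with $i'\neq i$. So your $m_i^{(j)}$ is allowed to depend on $y_{i'}^{(j')}$ for $i'\neq i$, whereas the original Skolem function $f_i^{(j)}$ depends only on $y_i^{(1)},\dots,y_i^{(j-1)}$. Pushing an existential past a universal in this direction weakens the formula, so your prenex sentence defines a set $\tilde{\J}_n\supseteq\J_n$, and you have only computed the complexity of $\tilde{\J}_n$.

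The containment is strict already for $n=1$. Take $A=\bigcup_{k,l\in\omega}\bigl(X_k\cap X_{(l,k)}\bigr)$. Using that $\{X_s:s\in\omega\}$ and $\{X_s:s\in\omega^2\}$ are partitions, one checks $A\cap X_{y_0}\cap X_{(y_1,y_2)}=X_{y_0}\cap X_{(y_1,y_0)}$ if $y_2=y_0$ and $=\emptyset$ otherwise. Hence your swapped formula holds for $A$ with $N_0=N_1=0$ and $m=y_0+1$ (note $m$ genuinely uses $y_0$). But $A\notin\J_1$: given any $F_0\in\fin^\star$ and $F_1\in(\fin^2)^\star$, the second-coordinate projection of $F_1$ is infinite, so it meets $F_0$; picking $(y_1,y_2)\in F_1$ with $y_2\in F_0$ and $s_0=y_2$ gives $(s_0,(y_1,y_2))\in F_0\times F_1$ with $A\cap X_{s_0}\cap X_{(y_1,y_2)}$ infinite. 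Thus $\tilde{\J}_1\neq\J_1$ and your $\bf{\Sigma^0_6}$ description is not a description of $\J_1$.

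For comparison, the paper avoids this trap by arguing inductively rather than by a single prenex unfolding: it introduces an auxiliary parameter $\mathcal{Y}\in\mathcal{P}_n$, proves $\J_0(\mathcal{Y})\in\bf{\Sigma^0_4}$ directly, and then passes from $\J_{n-1}$ to $\J_n$ by peeling off one $\exists i\,\forall j>i$ pair and invoking the hypothesis on the restricted families, gaining exactly two levels per step. The inductive framing is what keeps the existential witnesses at each stage from illegitimately depending on the ``wrong'' universal variables.
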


\begin{proof}
It is easy to see that $\J_n$ is an ideal. The "'In particular"' part will follow directly from $\J_n\in\bf{\Sigma^0_{2n+4}}$ and the definition of rank of an ideal. Thus, we only need to show that $\J_n\in\bf{\Sigma^0_{2n+4}}$.

For this proof, for a given $n\in\omega$ denote by $\mathcal{P}_n$ the family of all $\mathcal{Y}=\{\{Y_s:s\in\omega^{i+1}\}:i\leq n\}$ such that:
\begin{itemize}
\item $Y_s$ is a subset of $\omega$, for each $s\in\omega^{i+1}$, $i\leq n$;
\item $Y_s\cap Y_t=\emptyset$ whenever $s,t\in\omega^{i+1}$ for some $i\leq n$ and $s\neq t$.
\end{itemize}
Moreover, for given $n\in\omega$ and $\mathcal{Y}\in\mathcal{P}_n$ we denote by $\J_n(\mathcal{Y})$ the family consisting of all $A\subseteq\omega$ such that $\phi_n(\mathcal{Y})(A)\in\left(\fin^\star\times\ldots\times(\fin^{n+1})^\star\right)^\uparrow$, where 
$$\phi_n(\mathcal{Y})(A)=\left\{(s_0,\ldots,s_n)\in\omega\times\omega^2\times\ldots\times\omega^{n+1}: A\cap Y_{s_0}\cap\ldots\cap Y_{s_n}\in\fin\right\}.$$

We will inductively show that for every $n\in\omega$ and $\mathcal{Y}\in\mathcal{P}_n$ the family $\J_n(\mathcal{Y})$ is $\bf{\Sigma^0_{2n+4}}$. This will finish the proof since for each $n\in\omega$ we have $\J_n=\J_n(\{\{X_s:s\in\omega^{i+1}\}:i\leq n\})$, where the sets $X_s$, for $s\in\omega^{i+1}$, $i\leq n$, are as in Definition \ref{maindef}.

Firstly, fix any $\mathcal{Y}=\{\{Y_s:s\in\omega\}\}\in\mathcal{P}_0$ and observe that $\J_0(\mathcal{Y})$ is $\bf{\Sigma^0_{4}}$, since 
$$\J_0(\mathcal{Y})=\{A\subseteq\omega:\ \exists_{i\in\omega}\ \forall_{j>i}\ A\cap Y_j\in\fin\}=\bigcup_{i\in\omega}\bigcap_{j>i}\bigcup_{F\in[Y_i]^{<\omega}}\{A\subseteq\omega:\ A\cap Y_i=F\}$$
and $\{A\subseteq\omega:\ A\cap Y_i=F\}$ is closed.

Now, fix any $\mathcal{Y}=\{\{Y_s:s\in\omega^{i+1}\}:i\leq 1\}\in\mathcal{P}_1$ and observe that:
$$\J_1(\mathcal{Y})=\left\{A\subseteq\omega:\ \exists_{i\in\omega}\ \forall_{j>i}\ \exists_{k\in\omega}\ \forall_{l>k}\ A\cap Y_{j}\cap Y_{(j,l)}\in\fin\right\}=$$
$$=\left\{A\subseteq\omega:\ \exists_{i\in\omega}\ \forall_{j>i}\ A\cap Y_j\in\J_0((Y_{\{j\}\times s})_{s\in\omega})\right\}.$$
Since $A\mapsto A\cap X_j$ is continuous and $\J_0((Y_{\{j\}\times s})_{s\in\omega})\in\bf{\Sigma^0_{4}}$, for each $j\in\omega$, the family $\J_1(\mathcal{Y})$ is $\bf{\Sigma^0_{6}}$.

Thus, we have shown the thesis for $n=0$ and $n=1$. Suppose now that $\J_m(\mathcal{Y})$ is $\bf{\Sigma^0_{2m+4}}$ for all $m<n$ and $\mathcal{Y}\in\mathcal{P}_m$. Fix any $\mathcal{Y}=\{\{Y_s:s\in\omega^{i+1}\}:i\leq n\}\in\mathcal{P}_n$. Note that:
$$\J_n(\mathcal{Y})=\left\{A\subseteq\omega:\ \exists_{i\in\omega}\ \forall_{j>i}\ A\cap Y_j\in\J_{n-1}((Y_{\{j\}\times s})_{s\in\omega},\ldots,(Y_{\{j\}\times s})_{s\in\omega^n})\right\}.$$
Hence, using the inductive assumption, we get that $\J_n(\mathcal{Y})$ is $\bf{\Sigma^0_{2n+4}}$.
\end{proof}

We are ready to prove that $\fin'_\omega$ is as needed, i.e., that it is a Borel ideal of rank $\omega$ not containing any isomorphic copy of $\fin_\omega$.

\begin{theorem}
\label{x}
$\fin'_\omega$ is a $\bf{\Sigma^0_\omega}$ ideal of rank $\omega$.
\end{theorem}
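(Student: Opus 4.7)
The plan is to establish the two non-trivial assertions separately: $\fin'_\omega\in\bf{\Sigma^0_\omega}$, and $\rk(\fin'_\omega)=\omega$. The complexity and the rank upper bound will come from Lemmas 4.3 and 4.4; the rank lower bound will come from embedding copies of $\fin_m$ inside $\fin'_\omega$ for every finite $m$ and invoking the easy direction of Conjecture 4.1.

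For the Borel class, I would simply combine Lemmas 4.3 and 4.4: since $\fin'_\omega=\bigcup_{n\in\omega}\J_n$ and each $\J_n\in\bf{\Sigma^0_{2n+4}}\subseteq\bf{\Sigma^0_\omega}$, the closure of $\bf{\Sigma^0_\omega}$ under countable unions yields $\fin'_\omega\in\bf{\Sigma^0_\omega}$. Exploiting the ordinal identity $1+\omega=\omega$, the ideal $\fin'_\omega$ itself then serves as a separating $\bf{\Sigma^0_{1+\omega}}$-set, so $\rk(\fin'_\omega)\leq\omega$, provided $\fin'_\omega\cap(\fin'_\omega)^\star=\emptyset$. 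This I would verify by noting that the genericity assumption of Definition 4.1---every finite intersection $\bigcap_{i\in F}X_{s_i}$ is infinite---forces $\phi_n(\omega)=\emptyset$ for every $n$, whence $\omega\notin\J_n$, and hence $\omega\notin\fin'_\omega$.

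For the lower bound $\rk(\fin'_\omega)\geq\omega$, I would show $\fin_m\sqsubseteq\fin'_\omega$ for every $m\in\omega$, so that the $\Leftarrow$ direction of Conjecture 4.1 (noted in the Introduction as known in general) forces $\rk(\fin'_\omega)\geq m$ for every $m$, i.e., $\geq\omega$. For $m\geq 2$, the bijection $\sigma:\omega^m\to\omega$ guaranteed by Definition 4.1 (applied with $n=m-1$) satisfies $\sigma[\{s\}\times\omega]=X_s$ for every $s\in\omega^{m-1}$, so its inverse $f=\sigma^{-1}:\omega\to\omega^m$ sends any $A\in\fin^m$ to $\sigma[A]\in\fin^m(\{X_s:s\in\omega^{m-1}\})\subseteq\fin'_\omega$, witnessing $\fin_m=\fin^m\sqsubseteq\fin'_\omega$; the case $m=1$ is trivial. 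Combined with the upper bound, this gives $\rk(\fin'_\omega)=\omega$.

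Once Lemmas 4.3 and 4.4 are in hand, no step of this proof is technically substantial; the real difficulty has already been absorbed by Lemma 4.4's Borel-complexity computation. The one point to be careful about is to use only the $\Leftarrow$ direction of Conjecture 4.1 for finite $\alpha$---never any form of it at $\alpha=\omega$, since that is precisely what the rest of the paper is going to refute.
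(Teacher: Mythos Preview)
Your proof is correct and follows essentially the same route as the paper: the upper bound comes from Lemmas~4.3 and~4.4, and the lower bound from $\fin_{m}\sqsubseteq\fin'_\omega$ together with the known $\Leftarrow$ direction of the conjecture (which the paper cites as \cite[Lemma~7.2]{Debs}). One harmless slip: when you invoke Definition~4.1 to get $\sigma:\omega^{m}\to\omega$, the correct index is $n=m-2$ (so that $n+2=m$), not $n=m-1$; your subsequent use of $s\in\omega^{m-1}$ already reflects the right value.
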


\begin{proof}
As $\fin_{n+2}\sqsubseteq\fin'_\omega$ for all $n\in\omega$, $\text{rk}(\fin'_\omega)\geq n+2$ for all $n\in\omega$ (by \cite[Lemma 7.2]{Debs}), i.e., $\text{rk}(\fin'_\omega)\geq \omega$. On the other hand, $\text{rk}(\fin'_\omega)\leq \omega$ since $\fin'_\omega$ is $\bf{\Sigma^0_\omega}$ (by Lemmas \ref{lem} and \ref{Borelcomplexity}). 
\end{proof}

\begin{theorem}
$\fin_\omega\not\sqsubseteq\fin'_\omega$.
\end{theorem}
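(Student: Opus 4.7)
My plan is to suppose, for a contradiction, that $f:\omega\to\sum_{n\ge 1}\omega^n$ is a bijection with $f^{-1}[M]\in\fin'_\omega$ for every $M\in\fin_\omega$, and to construct an $M\in\fin_\omega$ with $f^{-1}[M]\notin\fin'_\omega$, contradicting Lemma~\ref{lem}.

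The first step is a reduction to cylinders. Every element of $\fin_\omega$ is contained (modulo finite-level points) in some $C_{i,A}=\sum_{j\ge i}\omega^{j-i}\times A$ with $A\in\fin^i$, so the hypothesis amounts to $f^{-1}[C_{i,A}]\in\fin'_\omega$ for every $i\ge 1$ and $A\in\fin^i$. Writing $W_j=f^{-1}[\{j\}\times\omega^j]$ and defining $\Psi_i(x)=\pi_{i,j}(y)$ for $x\in W_j$ (with $j\ge i$ and $f(x)=(j,y)$), this rewrites as $\Psi_i^{-1}[A]\in\fin'_\omega$ for every such $A$. The $\Psi_i$'s are coherent: $\Psi_i=\pi_{i,i'}\circ\Psi_{i'}$ on $\bigsqcup_{j\ge i'}W_j$ whenever $i\le i'$. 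The target becomes: find $i$ and $A\in\fin^i$ such that $\Psi_i^{-1}[A]\notin\bigcup_n\J_n$.

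The main step is a diagonal construction of such an $A$. By Lemma~\ref{lem}, I must ensure that for every $n$ and every $(B_0,\dots,B_n)\in\fin\times\cdots\times\fin^{n+1}$, there are $s_l\in\omega^{l+1}\setminus B_l$ with $\Psi_i^{-1}[A]\cap X_{s_0}\cap\cdots\cap X_{s_n}$ infinite. Enumerate all such tuples as $(n_m,B_0^m,\dots,B_{n_m}^m)_{m\in\omega}$ and build $A$ recursively. At stage $m$, use the strong intersection property from Definition~\ref{maindef}, which guarantees that $Y_m:=X_{s_0^m}\cap\cdots\cap X_{s_{n_m}^m}$ is infinite for any choice $s_l^m\notin B_l^m$. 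Extract an infinite subset $F_m\subseteq Y_m$, push it through $\Psi_i$, and add a $\fin^i$-small set $S_m\subseteq\omega^i$ to $A$, chosen so that $F_m\subseteq\Psi_i^{-1}[S_m]\cap Y_m$. Placing each $S_m$ into a fresh ``first-coordinate slot'' of $\omega^i$ (with the remaining $(i-1)$ coordinates forming a $\fin^{i-1}$-small set) keeps $\bigcup_m S_m$ in $\fin^i$, while $F_m$ provides the required infinite intersection at stage $m$. At the end, $M:=C_{i,A}\in\fin_\omega$ but $f^{-1}[M]=\Psi_i^{-1}[A]$ avoids every $\J_n$, a contradiction.

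The main obstacle is the simultaneous control of $A\in\fin^i$ and of the infinite intersections. Because $\fin^i$-smallness is an $i$-fold iterated ``almost-all-slices-finite'' condition, each $S_m$ must be concentrated into a single fresh first coordinate $k_m$, with its slice $\fin^{i-1}$-small—and within that slice, further constraints must be respected recursively. The strong intersection property of the $X_s$'s is precisely what provides enough freedom at each stage: the infiniteness of $Y_m$ lets one pass to an infinite subset $F_m$ whose $\Psi_i$-image lies in any prescribed fresh ``column'' of $\omega^i$, possibly after shrinking $F_m$ by pigeonhole. Verifying that this compression can always be performed (and that a single $i$ suffices throughout the recursion, rather than forcing $i$ to grow with $m$) is the technical crux; this is where the definition of $\fin'_\omega$ in Definition~\ref{maindef}, with its explicit requirement that every finite intersection $\bigcap_{l\in F}X_{s_l}$ be infinite, is used in an essential way.
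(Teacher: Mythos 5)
Your opening reduction is sound and matches the paper's starting point: the hypothesis $\fin_\omega\sqsubseteq\fin'_\omega$ is indeed equivalent to saying that $\Psi_i^{-1}[A]\in\fin'_\omega$ for every level $i$ and every $A\in\fin^i$, so by Lemma \ref{lem} it suffices to produce one $i$ and one $A\in\fin^i$ with $\Psi_i^{-1}[A]\notin\J_n$ for all $n$. But the diagonal construction you propose cannot be carried out as stated. For $n\geq 1$ the family of tuples $(B_0,\dots,B_n)\in\fin\times\cdots\times\fin^{n+1}$ is uncountable and, worse, has no countable cofinal subfamily: already $\fin^2$ is not countably generated (given $A_k\in\fin^2$, choose distinct columns $i_k$ with $(A_k)_{i_k}$ finite and points $b_k\notin(A_k)_{i_k}$; then $\{(i_k,b_k):k\in\omega\}\in\fin^2$ is contained in no $A_k$). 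Since the requirement ``there exist $s_l\notin B_l$ with infinite intersection'' only gets harder as the $B_l$ grow, meeting countably many tuples certifies nothing about the uncountably many tuples not dominated by them, so ``enumerate all such tuples'' is not a step that can be taken. One would have to replace the pointwise requirements by a structural positivity condition on the set $\{(s_0,\dots,s_n):\ \Psi_i^{-1}[A]\cap X_{s_0}\cap\dots\cap X_{s_n}\notin\fin\}$, which is a genuinely different argument.

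There is a second, independent gap at the point you yourself flag as ``the technical crux.'' The fresh-slot placement can fail outright: nothing in the hypothesis prevents $f$ from mapping all of $Y_m$ (minus the low levels) into a single column $\{k\}\times\omega^{i-1}$ of $\omega^i$, in which case no pigeonhole shrinking puts $F_m$ into a prescribed fresh column; and if infinitely many $Y_m$ concentrate on distinct single points $t_m$ at level $1$, the only available $S_m$ are cylinders $\omega^{i-1}\times\{t_m\}$, whose union $\omega^{i-1}\times\{t_m:m\in\omega\}$ is not in $\fin^i$ for any $i$. Choosing the level $i$ so that such conflicts cannot occur is where essentially all of the work lies, and your outline gives no mechanism for it. The paper supplies exactly this: it first proves a uniformity statement producing a single $m$ such that every relevant ``finitely-many-blocks-per-column'' set lies in $\J_m$; it then works at level $2m+5$, uses Lemma \ref{Borelcomplexity} to get $\rk(\J_m)\leq 2m+3$, and invokes homogeneity of $\fin^{2m+4}$ together with \cite[Lemma 7.2]{Debs} to find, inside every $\J_m$-positive piece, a set in $\overline{\fin^{2m+4}}\setminus\J_m$; these pieces are then assembled into $W\in\overline{\fin^{2m+5}}\setminus\fin'_\omega$ with $f[W]\in\fin_\omega$. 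Without an argument of this kind (a complexity/rank gap between $\bigcup_n\J_n$ restricted to the relevant sets and the pulled-back copy of $\fin^i$), the recursion does not close.
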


\begin{proof}
Fix any bijection $f:\omega\to\sum_{i>0}\omega^i$ and suppose to the contrary that it witnesses $\fin_\omega\sqsubseteq\fin'_\omega$. For a set $X\subseteq\omega^k$ denote $\overline{X}=\sum_{i\geq k}\pi_{k,i}^{-1}[X]$ (so that $X\in\fin^k$ implies $\overline{X}\in\fin_\omega$ and $f^{-1}[\overline{X}]\in\fin'_\omega$).

Since $\{1\}\times\omega,\{2\}\times\omega^2\in\fin_\omega$, we have $A=f^{-1}[(\{1\}\times\omega)\cup(\{2\}\times\omega^2)]\in\fin'_\omega$. For each $i,j\in\omega$ the set $\{(i,j)\}\times\omega$ is in $\fin^3$, so $A_{i,j}=f^{-1}\left[\overline{\{(i,j)\}\times\omega}\right]\in\fin'_\omega$. Observe that $\omega=A\cup\bigcup_{i,j\in\omega}A_{i,j}$. 

We claim that there is $m\in\omega$ such that each $Y\subseteq\omega\setminus A$ with
$$\exists_{h\in\omega^\omega}\ Y\subseteq\bigcup_{i\in\omega}\bigcup_{j\leq h(i)}A_{i,j}$$
is in $\J_m$. Suppose otherwise. Then for each $k\in\omega$ we could find $Y_k\subseteq\omega\setminus A$ such that $Y_k\subseteq\bigcup_{i\in\omega}\bigcup_{j\leq h_k(i)}A_{i,j}$ for some $h_k\in\omega^\omega$, but $Y_k\notin\J_k$. Note that $B_i=\bigcup_{j\in\omega}A_{i,j}\in\fin'_\omega$ for each $i\in\omega$, as $B_i=f^{-1}\left[\overline{\{i\}\times\omega^2}\right]$ and $\{i\}\times\omega^2$ is in $\fin^3$. Let $g\in\omega\to\omega$ be any nondecreasing function satisfying $\lim_i g(i)=+\infty$ and $\bigcup_{j\leq g(i)}B_{j}\in\J_{i}$ for all $i\in\omega$ (recall that $\J_i\subseteq\J_{i+1}$, for all $i\in\omega$, by Lemma \ref{lem}, so such function exists). Define $Y=\bigcup_{k\in\omega}Y_k\setminus(B_0\cup\ldots\cup B_{g(k)})$. Obviously, $Y\notin\fin'_\omega=\bigcup_{k\in\omega} \J_k$ as for each $k\in\omega$ we have $Y_k\setminus(B_0\cup\ldots\cup B_{g(k)})\subseteq Y$, $Y_k\notin\J_k$ and $B_0\cup\ldots\cup B_{g(k)}\in\J_k$. On the other hand, denoting $a_i=\max\{h_k(i):k\in g^{-1}(\{0,\ldots,i-1\})\}$ (note that $g^{-1}(\{0,\ldots,1\})$ is finite by $\lim_i g(i)=+\infty$), we have:
$$Y=\bigcup_{i\in\omega}(B_i\cap\bigcup\{Y_k:g(k)<i\})\subseteq\bigcup_{i\in\omega}\bigcup_{j\leq a_i}A_{i,j}=f^{-1}\left[\overline{\bigcup_{i\in\omega}\bigcup_{j\leq a_i}\{(i,j)\}\times\omega}\right].$$
Since $\bigcup_{i\in\omega}\bigcup_{j\leq a_i}\{(i,j)\}\times\omega\in\fin^3$, we get that $Y\in\fin'_\omega$. This contradiction proves that there is $m\in\omega$ such that each $Y\subseteq\omega\setminus A$ satisfying $Y\subseteq\bigcup_{i\in\omega}\bigcup_{j\leq h(i)}A_{i,j}$, for some $h\in\omega^\omega$, is in $\J_m$. 

We need to introduce some notations. Define $A'=f^{-1}[\bigcup_{i\leq 2m+4}\{i\}\times\omega^i]$ and note that $A'\in\fin'_\omega$ as $\bigcup_{i\leq 2m+4}\{i\}\times\omega^i\in\fin_\omega$. Let $C^n_s=f^{-1}\left[\overline{\{s\}\times\omega}\right]\setminus A'$ for all $n\leq 2m+4$ and $s\in\omega^{n}$ (i.e., $C^2_{(i,j)}=A_{i,j}\setminus A'$). Consider the following ideals on $\omega\setminus A'$:
$$Y\in\overline{\fin^{2m+5}}\ \Longleftrightarrow\ \{s\in\omega^{2m+4}: Y\cap C^{2m+4}_s\notin\fin\}\in\fin^{2m+4};$$
$$Y\in\overline{\fin^{2m+4}}\ \Longleftrightarrow\ \{s\in\omega^{2m+3}: Y\cap C^{2m+3}_s\notin\fin\}\in\fin^{2m+3};$$
$$Y\in\overline{\emptyset^2\otimes\fin^{2m+2}}\ \Longleftrightarrow\ \forall_{t\in\omega^2}\ \{s\in\omega^{2m+1}: Y\cap C^{2m+3}_{t^\frown s}\neq\emptyset\}\in\fin^{2m+1};$$
$$Y\in\overline{(\emptyset\otimes\fin)\otimes\emptyset}\ \Longleftrightarrow\ \{s\in\omega^{2}: Y\cap C^{2}_s\neq\emptyset\}\in\emptyset\otimes\fin;$$
$$Y\in\overline{\fin\otimes\emptyset}\ \Longleftrightarrow\ \{n\in\omega: Y\cap C^{1}_{(n)}\neq\emptyset\}\in\fin.$$
By the choice of $m$, we have $\overline{(\emptyset\otimes\fin)\otimes\emptyset}\subseteq\J_m$. Moreover, 
$$\overline{\emptyset^2\otimes\fin^{2m+2}}\cup\overline{(\emptyset\otimes\fin)\otimes\emptyset}\cup\overline{\fin\otimes\emptyset}\subseteq\overline{\fin^{2m+4}}\subseteq \overline{\fin^{2m+5}}$$
and for each $Y\in \overline{\fin^{2m+4}}$ there are $Y^0\in\overline{\emptyset^2\otimes\fin^{2m+2}}$, $Y^1\in\overline{(\emptyset\otimes\fin)\otimes\emptyset}$ and $Y^2\in\overline{\fin\otimes\emptyset}$ with $Y\subseteq Y^0\cup Y^1\cup Y^2$.

What is more, for each $n\in\omega$ consider the ideals $\overline{\fin^{2m+4}_n}$ and $\overline{\emptyset\otimes\fin^{2m+3}_n}$ on $C^{1}_{(n)}$ given by:
$$Y\in\overline{\fin^{2m+4}}(n)\ \Longleftrightarrow\ \{s\in\omega^{2m+3}: Y\cap C^{2m+4}_{(n)^\frown s}\notin\fin\}\in\fin^{2m+3};$$
$$Y\in\overline{\emptyset\otimes\fin^{2m+3}}(n)\ \Longleftrightarrow\ \{s\in\omega^{2m+3}: Y\cap C^{2m+4}_{(n)^\frown s}\notin\fin\}\in\emptyset\otimes\fin^{2m+2}_n.$$
Then $\overline{\emptyset\otimes\fin^{2m+3}}(n)\subseteq\overline{\fin^{2m+4}}(n)$ and if $Y_n\in\overline{\fin^{2m+4}}(n)$ for all $n\in\omega$ then $\bigcup_{n\in\omega}Y_n\in\overline{\fin^{2m+5}}$. Observe that $\overline{\fin^{2m+5}}$ is isomorphic with $\fin^{2m+5}$ and $\overline{\fin^{2m+4}}$ as well as each $\overline{\fin^{2m+4}}(n)$ are isomorphic with $\fin^{2m+4}$. 

We claim that $Y\in\overline{\fin^{2m+5}}$ implies $f[Y]\in\fin_\omega$. Indeed, fix $Y\in\overline{\fin^{2m+5}}$ (in particular, $Y$ is a subset of $\omega\setminus A'$), denote $S=\{s\in\omega^{2m+4}: Y\cap C^{2m+4}_s\notin\fin\}\in\fin^{2m+4}$ and observe that $f[\bigcup_{s\in S}C^{2m+4}_s]=\overline{S\times\omega}\in\fin_\omega$ as $S\in\fin^{2m+4}$ and $S\times\omega\in\fin^{2m+5}$. For each $x\in Y\setminus (\bigcup_{s\in S}C^{2m+4}_s)$ let $i_x\geq 2m+5$ be the unique index with $f(x)\in\{i_x\}\times\omega^{i_x}$. Then $|\{x\in Y\setminus (\bigcup_{s\in S}C^{2m+4}_s):\pi_{2m+5,i_x}(f(x))\in\{s\}\times\omega\}|<\omega$ for each $s\in\omega^{2m+4}$ and 
$$f\left[Y\setminus \left(\bigcup_{s\in S}C^{2m+4}_s\right)\right]\subseteq\overline{\left\{\pi_{2m+5,i_x}(f(x)): x\in Y\setminus \left(\bigcup_{s\in S}C^{2m+4}_s\right)\right\}}\in\fin_\omega.$$
Thus, $f[Y]\subseteq f[\bigcup_{s\in S}C^{2m+4}_s]\cup f[Y\setminus (\bigcup_{s\in S}C^{2m+4}_s)]\in\fin_\omega$.

By the above observation, in order to finish the proof it suffices to find $W\in\overline{\fin^{2m+5}}\setminus\fin'_\omega$. However, at first we need to define inductively sequences $(j(n))\subseteq\omega$ and $(Z_n)\subseteq\mathcal{P}(\omega)$ such that $Z_n\subseteq Y(h(n))$ and $Z_n\in\overline{\fin^{2m+5}}\setminus\J_m|(Y(h(n))$ for all $n\in\omega$, where $h:\omega\to\bigcup_{n>m+1}(\omega^{m+2}\times\ldots\times\omega^n)$ is a fixed bijection and 
$$Y(t)=\bigcup\{X_{s_0}\cap\ldots\cap X_{s_m}\cap X_{t_0}\cap\ldots\cap X_{t_k}:\ (s_0,\ldots,s_m)\in\omega\times\omega^2\times\ldots\times\omega^{m+1}\},$$ 
for all $k\in\omega$ and $t=(t_0,\ldots,t_k)\in\omega^{m+2}\times\omega^2\times\ldots\times\omega^{m+k+2}$ (note that $Y(t)\notin\J_m$ as $X_{s_0}\cap\ldots\cap X_{s_m}\cap X_{t_0}\cap\ldots\cap X_{t_k}$ are infinite). The sequence $(j(n))$ will indicate whether $Z_n\in\overline{\emptyset^2\otimes\fin^{2m+2}}$ (the case of $j(n)=0$) or $Z_n\in\overline{\emptyset\otimes\fin^{2m+3}}(k)$ (the case of $j(n)=k+1$).

Pick any $n\in\omega$ and consider the ideal $\J_m|Y(h(n))$. Recall that $\overline{\fin^{2m+4}}$ is isomorphic with $\fin^{2m+4}$, which is homogeneous (see \cite{Fremlin} or \cite[Remark after Proposition 2.9]{zJackiem}), i.e., either $Y(h(n))\in\overline{\fin^{2m+4}}$ or $\overline{\fin^{2m+4}}|Y(h(n))$ is isomorphic with $\fin^{2m+4}$. Moreover, as $\J_m$ is $\bf{\Sigma^0_{2m+4}}$ (by Lemma \ref{Borelcomplexity}) and the identity function from $\mathcal{P}(Y(h(n)))$ to $\mathcal{P}(\omega)$ is continuous, $\J_m|Y(h(n))$ is $\bf{\Sigma^0_{2m+4}}$ as well, so $\text{rk}(\J_m|Y(h(n)))\leq 2m+3$. Thus, either $Y(h(n))\in\overline{\fin^{2m+4}}$ or $\overline{\fin^{2m+4}}|Y(h(n))\not\subseteq\J_m|Y(h(n))$ (by \cite[Lemma 7.2]{Debs}). Let $Z\subseteq Y(h(n))$ be such that $Z\in\overline{\fin^{2m+4}}\setminus\J_m|Y(h(n))$. Then there are $Z^0\in\overline{\emptyset^2\otimes\fin^{2m+2}}$, $Z^1\in\overline{(\emptyset\otimes\fin)\otimes\emptyset}$ and $Z^2\in\overline{\fin\otimes\emptyset}$ with $Z\subseteq Z^0\cup Z^1\cup Z^2\subseteq Y(h(n))$ and $Z^0\cup Z^1\cup Z^2\in\overline{\fin^{2m+4}}$. Since $Z^1\in\J_m|Y(h(n))$ by $\overline{(\emptyset\otimes\fin)\otimes\emptyset}\subseteq\J_m$, there are two possible cases:
\begin{itemize}
\item If $Z^0\notin\J_m|Y(h(n))$ then put $j(n)=0$ and $Z_n=Z^0$.
\item If $Z^0\in\J_m|Y(h(n))$ then $Z^2\notin\J_m|Y(h(n))$, so there is $k\in\omega$ with $C^1_{(k)}\notin\J_m|Y(h(n))$. Put $j(n)=k+1$. Similarly as above, one can conclude that there is $Z'\subseteq Y(h(n))\cap C^1_{(k)}$ with $Z'\in\overline{\fin^{2m+4}}(k)\setminus\J_m|(Y(h(n))\cap C^1_{(k)})$. Actually, since $Z'\setminus\bigcup_{j\leq l}C^2_{(k,j)}\notin\J_m|Y(h(n))$, for every $l\in\omega$ (by the choice of $m$), we can find $Z_n\subseteq Y(h(n))\cap C^1_{(k)}$ with $Z_n\in\overline{\emptyset\otimes\fin^{2m+3}}(k)\setminus\J_m|(Y(h(n))\cap C^1_{(k)})$. 
\end{itemize}
Note that $Z_n\setminus \bigcup_{i\in\omega}\bigcup_{j\leq l}C^2_{(i,j)}\notin\J_m|Y(h(n))$, for every $l\in\omega$ (by the choice of $m$).

We are ready to define the required set $W\in\overline{\fin^{2m+5}}\setminus\fin'_\omega$:
$$W=\bigcup_{i\in\omega}\bigcup_{j\in\omega}\left(C^2_{(i,j)}\cap\bigcup\left\{Z_n:\ n\leq j,j(n)\in\{0,i+1\}\right\}\right).$$
It is easy to see that 
$$\bigcup_{i\in\omega}\bigcup_{j\in\omega}\left(C^2_{(i,j)}\cap\bigcup\left\{Z_n:\ n\leq j,j(n)=0\right\}\right)\in\overline{\emptyset^2\otimes\fin^{2m+2}}\subseteq\overline{\fin^{2m+5}}$$
and $\bigcup_{j\in\omega}\left(C^2_{(i,j)}\cap\bigcup\left\{Z_n:\ n\leq j,j(n)=i+1\right\}\right)$ is in $\overline{\emptyset\otimes\fin^{2m+3}}(i)$, for each $i\in\omega$. Hence, 
$$\bigcup_{i\in\omega}\bigcup_{j\in\omega}\left(C^2_{(i,j)}\cap\bigcup\left\{Z_n:\ n\leq j,j(n)=i+1\right\}\right)\in\overline{\fin^{2m+5}},$$
which implies that $W\in\overline{\fin^{2m+5}}$.

To end the proof, we have to explain why $W\notin\fin'_\omega$. Suppose to the contrary that $W\in\fin'_\omega$. By Lemma \ref{lem}, there is $k\in\omega$ with $W\in\J_k$. Since $\J_n\subseteq\J_{n+1}$, for all $n\in\omega$ (by Lemma \ref{lem}), we may assume that $k>m$. By the definition of $\J_k$, $W\in\J_k$ means that there are $M_0\subseteq\omega,M_1\subseteq\omega^2,\ldots,M_k\subseteq\omega^{k+1}$ with $M_i\in(\fin^{i+1})^\star$, for $i=0,\ldots,k$, such that if $s_i\in M_i$, for $i=0,\ldots,k$, then $W\cap X_{s_0}\cap\ldots\cap X_{s_k}\in\fin$. Pick any sequence $(s_{m+1},\ldots,s_k)\in M_{m+1}\times\ldots\times M_k$ and denote $n=h^{-1}((s_{m+1},\ldots,s_k))$. As $(Z_n\setminus\bigcup_{i\in\omega}\bigcup_{j<n}C^2_{(i,j)})\subseteq W$ and $Z_n\subseteq Y(h(n))$, we get 
$$W\cap X_{s_0}\cap\ldots\cap X_{s_k}=W\cap X_{s_0}\cap\ldots\cap X_{s_m}\cap Y(h(n))\supseteq\left(Z_n\setminus\bigcup_{i\in\omega}\bigcup_{j<n}C^2_{(i,j)}\right)\cap X_{s_0}\cap\ldots\cap X_{s_m}.$$
Since $Z_n\setminus\bigcup_{i\in\omega}\bigcup_{j<n}C^2_{(i,j)}\notin\J_m$, there is $(s_0,\ldots,s_m)\in M_0\times\ldots\times M_m$ such that $(Z_n\setminus\bigcup_{i\in\omega}\bigcup_{j<n}C^2_{(i,j)})\cap X_{s_0}\cap\ldots\cap X_{s_m}\notin\fin$. Hence, $W\cap X_{s_0}\cap\ldots\cap X_{s_k}\notin\fin$, which contradicts the choice of $M_0,\ldots,M_k$ and proves that $W\notin\fin'_\omega$. 
\end{proof}

\section{The modified conjecture}

In this section we discuss a possible modification of Conjecture \ref{hip}. However, we start with a nice feature of the ideal $\fin'_\omega$.

\begin{theorem}
\label{mc}
The following conditions are equivalent for any ideal $\I$:
\begin{itemize}
\item[(a)] $\fin'_\omega\sqsubseteq\I$;
\item[(b)] $\fin'_\omega\leq_K\I$;
\item[(c)] $\fin^{n+1}\sqsubseteq\I$ for all $n\in\omega$;
\item[(d)] $\fin^{n+1}\leq_K\I$ for all $n\in\omega$.
\end{itemize}
\end{theorem}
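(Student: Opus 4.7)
The implications $(a)\Rightarrow(b)$ and $(c)\Rightarrow(d)$ are immediate: any bijection witnessing $\sqsubseteq$ is, in particular, a function witnessing $\leq_K$. For $(a)\Rightarrow(c)$ and $(b)\Rightarrow(d)$, Definition~\ref{maindef} presents each $\fin^{n+2}(\{X_s:s\in\omega^{n+1}\})$ as an isomorphic copy of $\fin^{n+2}$ contained as a subideal of $\fin'_\omega$, so $\fin^{n+2}\sqsubseteq\fin'_\omega$ and $\fin^{n+2}\leq_K\fin'_\omega$ hold for every $n\in\omega$; transitivity of both relations then yields the claimed implications (the case corresponding to $\fin$ itself is trivial).

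The substance of the theorem is $(d)\Rightarrow(a)$. My plan is as follows. For each $n\in\omega$, fix by $(d)$ a Kat\v{e}tov reduction $g_n:\text{dom}(\I)\to\omega^{n+2}$ of $\fin^{n+2}$ to $\I$, and define the partition $\{Y^n_s:s\in\omega^{n+1}\}$ of $\text{dom}(\I)$ by $Y^n_s:=g_n^{-1}[\{s\}\times\omega]$. This partition enjoys the following \emph{absorption property}: any $A\subseteq\text{dom}(\I)$ with $\{s\in\omega^{n+1}:|A\cap Y^n_s|=\omega\}\in\fin^{n+1}$ lies in $\I$, since the hypothesis forces $g_n[A]\in\fin^{n+2}$ and hence $A\subseteq g_n^{-1}[g_n[A]]\in\I$. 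I then plan to construct a bijection $f:\text{dom}(\I)\to\omega$ by a back-and-forth recursion so that, for each $n$, the pullback partition $\{f^{-1}[X_s]:s\in\omega^{n+1}\}$ inherits the absorption property from $\{Y^n_s:s\in\omega^{n+1}\}$. At each stage, a finite partial bijection is extended while preserving the growing list of alignment conditions; the infinitude of all finite cross-intersections $X_{s_0}\cap\cdots\cap X_{s_k}$ (postulated in Definition~\ref{maindef}) provides room on the $\omega$-side, while the absorption property allows unavoidable mismatches to be routed into $\I$-small sets.

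Once $f$ is built, the verification proceeds via Lemma~\ref{lem}: for $A\in\fin'_\omega$ I fix $n$ with $A\in\J_n$, obtaining $B_0\in\fin,\ldots,B_n\in\fin^{n+1}$ with $A\cap X_{s_0}\cap\cdots\cap X_{s_n}\in\fin$ whenever $s_i\notin B_i$. Decomposing $f^{-1}[A]$ level by level along the pullback partitions and iteratively applying the absorption property at each of the $n+1$ levels yields $f^{-1}[A]\in\I$. The main obstacle is the back-and-forth step itself, where countably many alignment conditions (one per level $n$) must be satisfied simultaneously by a single bijection; this requires a diagonal argument that exploits both the infinitude of cross-intersections in Definition~\ref{maindef} and the freedom to adjust each $g_n$ on an $\I$-small set without disturbing its Kat\v{e}tov-reduction property.
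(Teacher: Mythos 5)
The easy implications and the ``absorption'' observation are fine: if $g_n:\mathrm{dom}(\I)\to\omega^{n+2}$ witnesses $\fin^{n+2}\leq_K\I$ and $\{s\in\omega^{n+1}:|A\cap g_n^{-1}[\{s\}\times\omega]|=\omega\}\in\fin^{n+1}$, then indeed $g_n[A]\in\fin^{n+2}$ and $A\subseteq g_n^{-1}[g_n[A]]\in\I$. The problem is that everything after that is a plan rather than a proof, and the plan as stated runs into a concrete obstruction. You want a \emph{bijection} $f$ built by back-and-forth so that each pullback partition $\{f^{-1}[X_s]:s\in\omega^{n+1}\}$ is aligned with the fiber partition $\{Y^n_s\}$ of $g_n$. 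At a ``back'' step you must choose $f^{-1}(m)$ for a given $m\in\omega$; since $m$ lies in a unique $X_{s_0}\cap\dots\cap X_{s_k}$ at each of the levels being tracked, the alignment forces $f^{-1}(m)\in Y^0_{s_0}\cap\dots\cap Y^k_{s_k}$. But the fibers of arbitrary Kat\v{e}tov reductions carry no analogue of the hypothesis of Definition~\ref{maindef}: the sets $Y^n_s$ may be finite or empty, and their finite cross-intersections may be empty, so the back step cannot in general be completed. Modifying each $g_n$ on a set in $\I$ does not repair this, because the defect is not supported on a single $\I$-small set. You flag this step as ``the main obstacle'' but do not overcome it, and overcoming it is the entire content of the theorem.

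The paper sidesteps both difficulties at once. It proves (d)$\Rightarrow$(c) by citing the known fact that $\fin^{n+1}\leq_K\I$ already implies $\fin^{n+1}\sqsubseteq\I$ (\cite[Example 4.1]{Kat}), and then proves (c)$\Rightarrow$(a) working with genuine bijections $h_n:\omega^{n+2}\to\omega$, whose column images $\{h_n[\{s\}\times\omega]:s\in\omega^{n+1}\}$ \emph{do} partition $\omega$. Moreover, it never constructs a bijection onto $\mathrm{dom}(\fin'_\omega)$: it builds only an injection $f:\omega\to\omega$, sending $i$ into $X_{g_i(0)}\cap\dots\cap X_{g_i(g_i(0))}$ (a diagonal alignment of finitely many levels, with the number of aligned levels growing with $g_i(0)$), and then invokes \cite[Lemma 3.3]{Kat} to upgrade an injective Kat\v{e}tov witness to containment of an isomorphic copy. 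If you want to salvage your approach, you should likewise first pass from (d) to (c) via the cited result, drop the back-and-forth in favour of a forward recursion producing an injection, and make the diagonal alignment explicit; as written, the argument has a genuine gap at its central step.
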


\begin{proof}
(a)$\implies$(b): Obvious.

(b)$\implies$(d): For $n\geq 2$ this follows directly from the definition of $\fin'_\omega$ and for $n=1$ this is satisfied for every ideal.

(d)$\implies$(c): This is \cite[Example 4.1]{Kat}.

(c)$\implies$(a): Without loss of generality we may assume that $\I$ is an ideal on $\omega$. By \cite[Lemma 3.3]{Kat} to prove $\fin'_\omega\sqsubseteq\I$ it suffices to define an injection $f:\omega\to\omega$ witnessing that $\fin'_\omega\leq_K\I$. 

For each $n\in\omega$ let $h_n:\omega^{n+2}\to\omega$ denote the bijection witnessing that $\fin^{n+2}\sqsubseteq\I$. Given $i,n\in\omega$ let $g_i(n)\in\omega^{n+1}$ be the unique index such that $i\in h_n[\{g_i(n)\}\times\omega]$ (i.e., $g_i(n)$ indicates in which element of the partition $\{h_n[\{s\}\times\omega]:s\in\omega^{n+1}\}$ number $i$ is).

We will define $f$ inductively. At first let $f(0)$ be any element of $X_{g_0(0)}\cap X_{g_0(1)}\cap\ldots\cap X_{g_0(g_0(0))}$. Assume now that $f(j)$ for all $j<i$ are already defined. Let $f(i)$ be any element of 
$$X_{g_i(0)}\cap X_{g_i(1)}\cap\ldots\cap X_{g_i(g_i(0))}\setminus\{f(j):j<i\}.$$ 
Note that this is possible as $X_{g_i(0)}\cap X_{g_i(1)}\ldots\cap X_{g_i(g_i(0))}$ is infinite.

The important observation is that for each $n\in\omega$ and $i\in\omega\setminus\bigcup_{j<n}h_0[\{j\}\times\omega]$ we have $g_i(0)\geq n$, so $f(i)\in X_{g_i(0)}\cap\ldots\cap X_{g_i(n)}\subseteq X_{g_i(n)}$. Now we will check that $f$ is as needed. Since $\I$ is an ideal, it suffices to check that $f^{-1}[A]\in\I$ for all $A\in\fin^{n+2}(\{X_s:s\in\omega^{n+1}\})$ and $n\in\omega$. 

Fix $n\in\omega$ and $A\in\fin^{n+2}(\{X_s:s\in\omega^{n+1}\})$. Then $A=A'\cup A''$, where $A'\cap X_s\in\fin$ for all $s\in\omega^{n+1}$ and $A''\subseteq \bigcup_{s\in B} X_s$ for some $B\in\fin^{n+1}$. Observe that 
$$f^{-1}[X_s]\subseteq h_n[\{s\}\times\omega]\cup\bigcup_{j<n}h_0[\{j\}\times\omega]$$
 for all $s\in\omega^{n+1}$: for each $i\notin\bigcup_{j<n}h_0[\{j\}\times\omega]$ we have $f(i)\in X_{g_i(0)}\cap\ldots\cap X_{g_i(n)}$, so $f(i)\in X_s$ implies $s=g_i(n)$ and $i\in h_n[\{s\}\times\omega]$, since $\{h_n[\{t\}\times\omega]:t\in\omega^{n+1}\}$ is a partition of $\omega$. Thus, 
$$f^{-1}[A'']\subseteq h_n[B\times\omega]\cup\bigcup_{j<n}h_0[\{j\}\times\omega].$$
 
Moreover, 
$$C=f^{-1}[A']\setminus \bigcup_{j<n}h_0[\{j\}\times\omega]\in\I$$ 
as $C\cap h_n[\{s\}\times\omega]\in\fin$ for all $s\in\omega^{n+1}$. Indeed, if $i\in C\cap h_n[\{s\}\times\omega]$ then $s=g_i(n)$ (by the definition of $g_i(n)$) and using the above observation we have $f(i)\in A'\cap X_{s}$. Since $A'\cap X_{s}\in\fin$ and $f$ is injective, we conclude that $C\cap h_n[\{s\}\times\omega]\in\fin$.

Therefore,
$$f^{-1}[A]=f^{-1}[A']\cup f^{-1}[A'']\subseteq \bigcup_{j<n}h_0[\{j\}\times\omega]\cup C\cup h_n[B\times\omega]\in\I$$
(since $B\times\omega\in\fin^{n+2}$). 
\end{proof}

\begin{remark}
In the language of \cite{Kat}, equivalence of items (a) and (b) in Theorem \ref{mc} means that $\fin'_\omega$ has property Kat.
\end{remark} 

\begin{corollary}
$\fin'_\omega\sqsubseteq\fin_\omega$.
\end{corollary}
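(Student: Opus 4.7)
My plan is to invoke Theorem~\ref{mc} with $\I=\fin_\omega$; by the equivalence of (a) and (d) stated there, it suffices to verify condition (d), that is, $\fin^{n+1}\leq_K\fin_\omega$ for every $n\in\omega$. Fix such an $n$ and write $Z_k=\{k\}\times\omega^k$, so that $\text{dom}(\fin_\omega)=\bigsqcup_{k\geq 1}Z_k$.

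The first observation I would record is that each slice $Z_k$ with $k\leq n$ is a member of $\fin_\omega$: taking $i=n+1$ and $P=\omega^{n+1}\in(\fin^{n+1})^\star$ in the definition of the inductive limit, the witness set $\sum_{j\geq n+1}\pi_{n+1,j}^{-1}[P]$ is contained in $\bigsqcup_{j\geq n+1}Z_j$, hence disjoint from $Z_k$ for $k\leq n$. In particular, $\bigcup_{k\leq n}Z_k\in\fin_\omega$.

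With this in hand I would define a Kat\v{e}tov reduction $f:\text{dom}(\fin_\omega)\to\omega^{n+1}$ piecewise, using the projections already present in the inductive system: on the slices $Z_k$ for $k\leq n$ let $f$ be arbitrary, and on $Z_k$ for $k\geq n+1$ set $f(k,x)=\pi_{n+1,k}(x)$. To check that $f$ reduces $\fin^{n+1}$ to $\fin_\omega$, pick $A\in\fin^{n+1}$, set $P=\omega^{n+1}\setminus A\in(\fin^{n+1})^\star$, and note that for every $j\geq n+1$ one has $\pi_{n+1,j}^{-1}[A]\cap\pi_{n+1,j}^{-1}[P]=\pi_{n+1,j}^{-1}[A\cap P]=\emptyset$. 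Therefore the part of $f^{-1}[A]$ lying inside $\bigsqcup_{k\geq n+1}Z_k$ is disjoint from $\sum_{j\geq n+1}\pi_{n+1,j}^{-1}[P]$; combined with the preceding observation that the remainder of $f^{-1}[A]$ sits inside the $\fin_\omega$-small set $\bigcup_{k\leq n}Z_k$, this yields $f^{-1}[A]\in\fin_\omega$, as required.

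The argument carries no real obstacle: it is essentially a direct unpacking of the definition of an inductive limit, and the key idea is that $\fin_\omega$ was designed precisely so that the projections $\pi_{n+1,k}$ witness $\fin^{n+1}\leq_K\fin_\omega$. The only point deserving a moment's attention is the treatment of the ``initial'' slices $Z_k$ with $k\leq n$, on which no projection $\pi_{n+1,k}$ is available; this is handled by the observation that those slices are themselves in $\fin_\omega$, so any definition of $f$ on them is harmless.
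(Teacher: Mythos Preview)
Your proof is correct and follows essentially the same route as the paper: both invoke Theorem~\ref{mc} and reduce the statement to checking that $\fin_\omega$ lies above every $\fin^{n+1}$. The only cosmetic difference is that the paper verifies condition~(c) (that $\fin^{n+1}\sqsubseteq\fin_\omega$) while you verify the weaker condition~(d) (that $\fin^{n+1}\leq_K\fin_\omega$); since Theorem~\ref{mc} makes these equivalent, this changes nothing, and your explicit construction of the reduction via the projections $\pi_{n+1,k}$ spells out precisely what the paper leaves as ``easy to check''.
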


\begin{proof}
It is easy to check that $\fin^{n+1}\sqsubseteq\fin_\omega$, for all $n\in\omega$. Thus, the thesis follows from Theorem \ref{mc}.
\end{proof}

In the light of Theorem \ref{mc}, there is a chance of reducing Conjecture \ref{hip} to the case of successor ordinals. We believe that the following can be proved using similar methods to those from this paper.

\begin{conjecture}
\label{hip2}
For each limit ordinal $0<\alpha<\omega_1$ there is a $\bf{\Sigma^0_{\alpha}}$ ideal $\fin'_\alpha$ of rank $\alpha$ such that the following conditions are equivalent for any ideal $\I$:
\begin{itemize}
\item[(a)] $\fin'_\alpha\sqsubseteq\I$;
\item[(b)] $\fin'_\alpha\leq_K\I$;
\item[(c)] $\fin_{\beta}\sqsubseteq\I$ for all successor ordinals $\beta<\alpha$;
\item[(d)] $\fin_{\beta}\leq_K\I$ for all successor ordinals $\beta<\alpha$.
\end{itemize}
\end{conjecture}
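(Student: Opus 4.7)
My plan is to mirror Sections~4 and~5 in the transfinite. For a countable limit ordinal $\alpha$, fix a strictly increasing cofinal sequence $(\beta_n)_{n\in\omega}$ of successor ordinals below $\alpha$, and then generalize Definition~\ref{maindef} by choosing partitions $\{X^n_s : s \in \text{dom}(\fin_{\beta_n})\}$ of $\omega$ into infinite pieces, jointly chosen so that every finite intersection $\bigcap_{n\in F}X^n_{s_n}$ is infinite (possible by a standard bookkeeping, since each $\text{dom}(\fin_{\beta_n})$ is countable). Define $\fin'_\alpha$ as the ideal on $\omega$ generated by $\bigcup_{n\in\omega}\fin_{\beta_n+1}(\{X^n_s\})$, where $\fin_{\beta_n+1}(\{X^n_s\})$ is the canonical isomorphic copy of $\fin_{\beta_n+1}=\fin\otimes\fin_{\beta_n}$ on $\omega$ induced by any bijection sending $\{s\}\times\omega$ onto $X^n_s$.

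I would next verify that $\fin'_\alpha$ is $\bf{\Sigma^0_\alpha}$ of rank exactly $\alpha$ by transfinitely generalizing Lemmas~\ref{lem} and~\ref{Borelcomplexity}. Writing $\fin'_\alpha = \bigcup_n \J_n$, where $\J_n$ is the finite-trace ideal built from the first $n$ partitions (analogous to the $\J_n$ defined via $\phi_n$ in Section~4), a transfinite induction mimicking the quantifier-counting proof of Lemma~\ref{Borelcomplexity} should yield $\J_n \in \bf{\Sigma^0_{\gamma_n}}$ for explicit $\gamma_n < \alpha$ with $\sup_n \gamma_n = \alpha$. This gives $\fin'_\alpha \in \bf{\Sigma^0_\alpha}$, hence $\rk(\fin'_\alpha) \leq \alpha$; the reverse inequality follows from $\fin_{\beta_n+1} \sqsubseteq \fin'_\alpha$ for every $n$ together with \cite[Lemma 7.2]{Debs}.

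For the equivalences, (a)$\Rightarrow$(b) is trivial; (b)$\Rightarrow$(d) follows because every successor $\beta<\alpha$ satisfies $\beta \leq \beta_n+1$ for some $n$, so $\fin_\beta \sqsubseteq \fin_{\beta_n+1} \leq_K \fin'_\alpha \leq_K \I$; and (d)$\Rightarrow$(c) is \cite[Example 4.1]{Kat}. The substantive implication (c)$\Rightarrow$(a) I would adapt from the proof of Theorem~\ref{mc}: given bijections $h_n$ witnessing $\fin_{\beta_n+1} \sqsubseteq \I$, build an injection $f:\omega\to\omega$ inductively by placing $f(i)$ in an intersection $\bigcap_{n \leq N(i)} X^n_{g^n_i}$ of ``height'' $N(i)$ tall enough to govern every partition level relevant to $i$ (where $g^n_i \in \text{dom}(\fin_{\beta_n})$ records the partition piece into which $h_n^{-1}$ sends $i$), and then verify $f^{-1}[A] \in \I$ for each generator $A \in \fin_{\beta_n+1}(\{X^n_s\})$ by the same case split as at the end of the proof of Theorem~\ref{mc}.

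The hardest step will be the Borel-complexity bound for $\J_n$ when $\beta_n$ is transfinite. Lemma~\ref{Borelcomplexity} exploited the crisp alternating-quantifier structure of $\fin^{n+1}$ to produce the bound $2n+4$, but for transfinite $\beta_n$ the descriptive complexity of $\fin_{\beta_n+1}$ is controlled by the inductive-limit apparatus of Section~3, so one needs a genuinely transfinite induction producing explicit ordinal bounds $\gamma_n < \alpha$ with $\sup_n\gamma_n=\alpha$. A secondary hurdle is the ``height'' prescription for $f$ in (c)$\Rightarrow$(a): for $\alpha=\omega$ it sufficed to intersect the pieces $X^0_{g_i(0)},\ldots,X^{g_i(0)}_{g_i(g_i(0))}$, but for higher $\alpha$ the scalar $g_i(0)$ must be replaced by an auxiliary function packaging the countable collection of partition data relevant to $i$ across all $h_n$, and one must verify that such data can be coherently assembled while preserving injectivity of $f$.
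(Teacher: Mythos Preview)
The statement you are addressing is labelled in the paper as a \emph{conjecture}, not a theorem; the paper does not prove it. The author writes only that ``we believe that the following can be proved using similar methods to those from this paper,'' and then remarks that the case $\alpha=\omega$ follows from Theorems~\ref{x} and~\ref{mc}. There is therefore no paper proof to compare your proposal against.

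Your sketch is precisely the programme the author has in mind, and for $\alpha=\omega$ it collapses to the arguments already carried out in Sections~4 and~5. But, as you yourself acknowledge, what you have written is a plan rather than a proof. The two obstacles you flag are genuine: the transfinite analogue of Lemma~\ref{Borelcomplexity} is not a routine generalisation, since the quantifier-counting there exploits the concrete alternating structure of $\fin^{n+1}$, whereas for transfinite $\beta_n$ one must thread the complexity estimate through the inductive-limit machinery of Section~3; and the height prescription replacing the scalar $g_i(0)$ in the (c)$\Rightarrow$(a) step indeed needs new bookkeeping. One further gap you do not highlight: your (d)$\Rightarrow$(c) step invokes \cite[Example 4.1]{Kat}, but in the paper that citation is used only for the finite-rank ideals $\fin^{n+1}$; whether the analogous Kat\v{e}tov-versus-isomorphic-copy statement holds for $\fin_\beta$ with transfinite successor $\beta$ would itself require justification.
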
 

By Theorems \ref{x} and \ref{mc}, the above is true for $\alpha=\omega$. 

Conjecture \ref{hip2} would reduce Conjecture \ref{hip} to the case of successor ordinals:

\begin{remark}
Assume that Conjecture \ref{hip2} is true. Then Conjecture \ref{hip} is true for all successor ordinals if and only if for each analytic ideal $\I$:
\begin{itemize}
\item $\text{rk}(\mathcal{I})\geq\alpha$ is equivalent to $\fin_\alpha\sqsubseteq\I$, for all successor ordinals $0<\alpha<\omega_1$;
\item $\text{rk}(\mathcal{I})\geq\alpha$ is equivalent to $\fin'_\alpha\sqsubseteq\I$, for all limit ordinals $0<\alpha<\omega_1$.
\end{itemize}
\end{remark}


\end{document}